\newcommand{\dy}{\mathrm{d} y}
\newcommand{\dx}{\mathrm{d} x}
\renewcommand{\d}{\mathrm{d}}
\newcommand{\ee}{\mathrm{e}}
\newcommand{\R}{\mathbb{R}}
\numberwithin{equation}{section}
\newtheorem{theorem}{Theorem}[section]
\newtheorem{lemma}[theorem]{Lemma}
\newtheorem{proposition}[theorem]{Proposition}
\begin{document}

\title {\bf Persistence of solutions in a nonlocal predator-prey system with a shifting habitat}
\author{Min Zhao
\thanks{Corresponding author. Email addresses: minzhao@mail.bnu.edu.cn (M. Zhao), ryuan@bnu.edu.cn (Rong Yuan)}
, Rong Yuan}
\date{\it Laboratory of Mathematics and Complex Systems (Ministry of Education), \\School of Mathematical Sciences, Beijing Normal University, \\Beijing 100875, People's Republic of China}
\maketitle

\vspace*{-1cm}
\begin{center}

\begin{minipage}[t]{13cm}

{\bf Abstract:}\quad  In this paper, we mainly study the propagation properties of a nonlocal dispersal predator-prey system in a shifting
environment. It is known that Choi et al. [J. Differ. Equ. 302 (2021), pp. 807--853] studied the persistence or extinction of the prey and the predator separately in various moving frames. In particular, they achieved a complete picture in the local diffusion case. However, the question of the persistence of the prey and the predator in some intermediate moving frames in the nonlocal diffusion case is left open in Choi et al.'s paper. By using some prior estimates, the Arzel\`{a}-Ascoli theorem and a diagonal extraction process, we can extend and improve the main results of Choi et al. to achieve a complete picture in the nonlocal diffusion case.

{\bf Keywords}\quad Predator-prey system; Persistence; Nonlocal dispersal; Shifting environment\\
{\bf Mathematics Subject Classification}\quad 35K57, 35K55, 35B40, 92D25
\end{minipage}
\end{center}

\section{Introduction}

In recent years, in addition to seasonal and regional differences, climate changes caused by global warming, industrialization and overdevelopment have had a huge impact on the habitats of biological species \cite{Wu19}. The habitat of species will move in time and space with climate change \cite{Wu13, Wu19}. A simple pattern for measuring climate change is the shifting of environment quality with constant speed.
This will translate into the shifting of the habitat quality, which will be reflected by the shifting of the growth rate for a species \cite{Yuan19}. At present, many scholars have devoted themselves to studying such topics, see \cite{Berestycki09, Berestycki18, Du15, Gonzalez10, Hu15, Hu17, Lei17, Li14, Li16, Potapov04, Parr12, Scheffer12, Vo15, Wang-Li21, Zhou11}.

Random diffusion describes that organisms can only move to their surrounding neighborhoods \cite{Bao16, Kao}. Based on this property of random diffusion, we can describe the dynamics of random dispersal through the reaction-diffusion
model. There has been much research on the spreading speeds and forced waves of the reaction-diffusion systems in a shifting environment. For the case of  scalar equation, Bouhours et al. \cite{Bouhours19}, Hu et al. \cite{Hu19} and Li et al. \cite{Li14} explored conditions for extinction and persistence as well as spatial-temporal dynamics of the species with a shifting habitat edge, respectively.
Fang et al. \cite{Fang21} and  Vo \cite{Vo15} investigated the propagation dynamics of a reaction-diffusion equation in a time-periodic shifting environment. Berestycki et al. \cite{Berestycki09, Berestycki08, Berestycki18} established existence of forced waves of reaction-diffusion equations for population dynamics with a shifting habitat. Hu et al. \cite{Hu17} established existence of an extinction wave in the Fisher equation with a shifting habitat. For the case of competition model, Potapov and Lewis \cite{Potapov04} considered a Lotka-Volterra competition model in a domain with a moving range boundary, by which they obtained a critical patch size for each species to persist and spread. Yuan et al. \cite{Yuan19} and Zhang et al. \cite{Zhang17} studied the persistence versus extinction for two competing species under climate change. Dong et al. \cite{Dong21} investigated the existence of forced waves in a Lotka-Volterra competition-diffusion model with a shifting habitat. Berestycki et al. \cite{Berestycki14} investigated the persistence of two species and the gap formation of Lotka-Volterra competition model. For the case of cooperative model, Yang et al. \cite{Yang19} considered the existence and asymptotics of forced wave solutions in a Lotka-Volterra cooperative model under climate change. Furthermore, we refer the readers to some literature considering forced waves in domains with a free boundary \cite{Hu20,Lei17}.

In addition to the random diffusion, nonlocal dispersal is more reasonable for some species to travel for some distance, and their movement and interactions may occur between non-adjacent
spatial locations \cite{Bao16,Jin, Kao, Lee, Levin,Zhao22}. The widespread long-distance dispersal or nonlocal internal interactions are usually modeled by an appropriate integral operator, such as $\int_{\R}J(x-y)[u(y)-u(x)]\dy$. 
Concerning the study of the effects of climate change in nonlocal dispersal, the work of Coville \cite{Coville}, Leenheer et al. \cite{Leenheer20}, Li et al. \cite{Li18}, and Wang et al. \cite{Wang19} studied the persistence criterion and existence, uniqueness as well as stability of forced waves for the scalar nonlocal dispersal population model in a shifting environment. In addition, Zhang et al. \cite{Zhang20} explored the propagation dynamics of a nonlocal dispersal Fisher-KPP equation in a time-periodic shifting habitat. For the case of competition system, Wu et al. \cite{Wu19} studied the spatial-temporal spreading dynamics of a Lotka-Volterra competition model with nonlocal dispersal under a shifting environment. Wang et al. \cite{Wang20} and Wang et al. \cite{Wang21} investigated the existence of forced waves and gap formations for the lattice and continuous Lotka-Volterra competition models with nonlocal dispersal and shifting habitats, respectively. Bao et al. \cite{Bao16} studied the traveling wave solutions of Lotka-Volterra competition systems with nonlocal dispersal in
periodic habitats. 
For the case of the prey-predator system, Choi et al. \cite{Choi21} studied the persistence of species in a predator-prey system with climate change and either nonlocal or local dispersal.

Due to the lack of comparison principle and the issue of the compactness of the set of solutions with bounded initial data, there is less work on the predator-prey systems with nonlocal dispersal in shifting environments. Motivated by the aforementioned works, we would like to extend and improve the work of Choi et al. \cite{Choi21} to deal with the spreading population dynamics of predator-prey species in some intermediate moving frames with nonlocal dispersal under a shifting habitat. In this paper, we consider the following predator-prey model with nonlocal dispersal proposed by Choi et al. \cite{Choi21}
\begin{equation}\label{1}
\left\{\begin{array}{l}
{\frac{\partial u}{\partial t}(x, t)=d_{1}[(J_{1} * u)(x, t)-u(x, t)]+ r_{1}u(x, t)[\alpha(x-st)-u(x, t)-av(x, t)],} \\
{\frac{\partial v}{\partial t}(x, t)=d_{2}[(J_{2} * v)(x, t)-v(x, t)]+ r_2v(x, t)[-1+b u(x, t)-v(x, t)],}
\end{array}\right.
\end{equation}
where $x\in \mathbb{R}, t>0$ and $r_{1}, r_{2}, a, b$ are all positive constants. Here $u(x, t)$ and $v (x, t)$ are the population
densities of prey and predator species at spatial position $x \in \mathbb{R}$ and time $t>0$, respectively. The dynamics of the prey
population follow a logistic growth which depends on the shifting habitat with a fixed speed $s>0$. Parameters $r_{1}$ and $r_2$ denote the intrinsic growth rates. The constant $a$ denotes the predation rate and $b$ denotes the biomass conversion rate. $d_{1}>0$ and $d_{2}>0$ are the diffusion coefficients for prey and predator species, respectively. The term $J_{i}\ast w-w$
describes the spatial dispersal process and
$$
(J_{i}\ast w)(x, t)-w(x, t)=\int_{\mathbb{R}} J_{i}(x-y) w(y, t) \mathrm{d} y-w(x, t),\quad i=1, 2,
$$
where the symbol $*$ denotes the convolution product for the spatial variable. Here we assume that the kernel function $J_{i}: \R\rightarrow\R (i=1, 2)$ is continuous and satisfies the following properties:
\begin{description}
  \item[(J1)] $\displaystyle J_i(x) = J_i(-x)\geq0 \text{ for any }x\in \R \text{ and } \int_{\R} J_i(x)\dx =1, i=1, 2$;
  \item[(J2)] $\displaystyle J_i\in C^1(\R)$ \text{ and } $J_i$ is compactly supported, i=1, 2.
\end{description}
The function $\alpha(\cdot)$ models the climate change which depends on a shifting variable, and throughout the paper we assume that it satisfies the following properties:
\begin{description}
  \item[($\alpha_1$)] $\alpha(\cdot)\in C^1(\R)$ and nondecreasing in $\R$;
  \item[($\alpha_2$)] $-\infty<\alpha(-\infty)< 0 < \alpha(\infty) < \infty$; furthermore, we choose $\alpha(\infty) = 1$ without loss of
   generality (up to a rescaling);
  \item[($\alpha_3$)] The derivative of $\alpha(\cdot)$ is bounded in $\R$.
\end{description}

It is clearly that the shifting environment may be divided into the favorable region $\{x \in\R :\alpha (x -st) > 0\}$ and the unfavorable region $\{x \in\R :\alpha (x -st) \leq 0\}$, both shifting with speed $s>0$. The non-decreasing property of $\alpha(\cdot)$ assumes that the environment gets worse as time goes on, and the negativity of $\alpha(-\infty)$ accounts for a scenario in that the environment is shifting to a very severe level in the unfavorable region. In assumption $(\alpha_3)$, we only require the derivative of $\alpha(\cdot)$ to be bounded instead of uniformly continuous, mainly considering that more dramatic effects of global warming, such as changes in the frequencies of severe rainstorms, hurricanes
and other climatic disasters, may have a short-term consequence on local or regional species survival and reproduction \cite{Johnsgard, Lewis}.

System \eqref{1} is supplemented by the initial conditions:
\begin{equation}\label{2}
u(x, 0)=u_{0}(x), \quad v(x, 0)=v_{0}(x),\quad x \in \mathbb{R},
\end{equation}
where $u_{0}(x), v_{0}(x)$ are bounded nonnegative functions with nonempty compact support. 

Throughout this work, we give the following assumption about the parameter:
\begin{description}
  \item[(H1)] $b>1$,
\end{description}
which ensures that the amount of prey is sufficient to maintain the positive density of the predator. From system \eqref{1}, we can see that the predator cannot survive without the prey.

It is known that Choi et al. \cite{Choi21} explored the propagation properties of the predator and the prey of system \eqref{1} in two different
situations, namely, the prey is faster than the predator, in the sense that its maximal speed $s^*$ is larger than the maximal speed $s_*$ of the predator and the situation $s^*\leq s_*$, i.e., the predator is faster than the prey, respectively. Here, the speed $s^*$ represents the maximum speed of the prey in the ``favorable'' environment and without predator; the speed $s_*$ denotes the maximum speed of the predator when the prey density is at saturation, see Section 2 for details. For the persistence of system \eqref{1}, they showed that the predator and the prey are persistent in cases $s<s^{**}$ $(s^{**}<s^{*})$ and $s<\underline{s}^{*}=\min\{s^{**}, s_{**}\}$ $(s_{**}<s_{*})$, respectively. Here $s^{**}$ is the speed of the prey in the favorable environment when there is a maximal amount of predator, and $s_{**}$ is the speed of the predator when there is a minimal amount of prey. However, the proof of persistence of the predator and the prey of system \eqref{1} with speed $s$ and $\min\{s_*, s^*\}>s$ remains an open question. In this paper, we extend and improve the main results in Choi et al. \cite{Choi21} to show that both species always persist and achieve a complete picture of the spreading dynamics of \eqref{1}. Inspired by the work of Choi et al.~\cite{Choi21} and Zhang et al.~\cite{Zhang}, we use some prior estimates, the Arzel\`{a}-Ascoli theorem and a diagonal extraction process to perform various limiting arguments. We conclude that under certain conditions, the predator and the prey persist.

This paper is organized as follows. In the next section, we establish some preliminary results.
In Section 3, we mainly consider the persistence of the prey $u$ of the system \eqref{1} with initial value \eqref{2} in the moving frames with speeds between $s$ and $\underline{s}= \min\{s^*, s_*\} > s.$ In Section 4, we show the persistence of the predator $v$ in the moving frames with speeds between $s$ and $\underline{s}= \min\{s^*, s_*\} > s.$

\section{Preliminaries }

In this section, we mainly introduce some preliminaries and recall some results for spreading speeds.

Firstly, we define 
$$X=\{w(x)|~w(x): \mathbb{R}\rightarrow\mathbb{R}\text{ is bounded and uniformly continuous}\},$$
with the norm
$$\|w\|_{X}=\displaystyle\sup_{x\in \mathbb{R}}|w(x)|.$$
Then $(X, \|\cdot\|_{X})$ is a Banach space.
Furthermore, for any constant $d > 0$, let
$$X_{d}=\{w \in X: 0 \leq w(x) \leq d,\, \forall x \in \mathbb{R}\}.$$
Set the order of the space $X^{2}=X\times X$ as follows:
$$\underline{w}\leq \overline{w}\Leftrightarrow \underline{w}_{i}(x)\leq \overline{w}_{i}(x), \quad x\in \mathbb{R}, i=1,2,$$
for any $\underline{w}=(\underline{w}_{1}(x), \underline{w}_{2}(x))$ and $\overline{w}=(\overline{w}_{1}(x), \overline{w}_{2}(x))\in X^{2}.$

We define the set $H\subset X^{2}$ by
$$H=\{(w_{1},w_{2})\in X^{2}: 0\leq w_{1}\leq 1 \text{ and }0\leq w_{2}\leq b-1\}.$$
Our initial datum will always be chosen in the set of $H$. Here, we point out that the set $H$ is positively invariant under the semiflow $\{S(t)\}_{t\geq0}$ generated by system \eqref{1}. In particular, this means that system \eqref{1} with initial condition \eqref{2} admits a unique globally defined solution $(U(x, t), V(x, t))$ with
$$(U, V)(x, \cdot) \in C^{1} ([0,\infty),X^{2}),\quad \forall x\in\R\quad\text{and}\quad (U, V )(\cdot, t) \in H, \quad\forall t \geq 0.$$

Based on the research of Choi et al. \cite{Choi21}, we obtain the spreading speed of the population prey by taking $\alpha\equiv 1$ and $v\equiv0$ in the $u$-equation of \eqref{1}, which is given by the quantity
\begin{equation}\label{5}
s^*:=\inf _{0<\lambda<+\infty} \frac{d_{1}\left[\int_{\mathbb{R}} J_{1}(y) \mathrm{e}^{\lambda y} \mathrm{d}y-1\right]+r_1}{\lambda}.
\end{equation}
Since $b>1$, we obtain the spreading speed of the predator population when the density of prey is fixed to its maximal capacity $1$, namely
\begin{equation}\label{6}
s_{*}:=\inf _{0<\lambda<+\infty} \frac{d_{2}\left[\int_{\mathbb{R}} J_{2}(y) \mathrm{e}^{\lambda y} \mathrm{d} y-1\right]+r_2(b-1)}{\lambda}.
\end{equation}
Let $s>0$ be a given fixed constant, we assume that
$$\underline{s}:=\min\{s^*, s_*\}>s.$$

Next, we give the following proposition for spreading speed of a nonlocal system which will be used in Sections 3 and 4.
\begin{proposition}[\cite{Jin}]\label{spreading speed}
Let w be a solution of the following system
\begin{equation}\label{4}
\left\{\begin{array}{l}
{\frac{\partial w}{\partial t}(x, t)=d(J\ast w-w)+rw(x,t)(k-w(x,t)),\quad x\in\mathbb{R},t>0,} \\
{w(x,0)=\chi(x),\quad x\in\mathbb{R},}
\end{array}\right.
\end{equation}
where kernel $J$ satisfies $(J1)$-$(J2)$ and the initial data $\chi\in X_{s}$ admits a nonempty compact support. The parameters satisfy $d > 0, r > 0$ and $k > 0$. Let
$w(\cdot, t)\in X_{s}$ for all $t > 0$ for a given $\chi\in X_{s}$ and $\bar{c}:=\inf _{0<\lambda<+\infty} \frac{d\left[\int_{\mathbb{R}} J(x) \mathrm{e}^{\lambda x} \mathrm{d} x-1\right]+rk}{\lambda}>0.$ Then the following statements are valid.
\begin{itemize}
\item[\emph{(i)}] For any $c>\bar{c}$, if $\chi$ has a nonempty compact support, then
\begin{align*}
\displaystyle\lim _{t \rightarrow \infty} \sup _{|x|>c t} w(x, t)=0;
\end{align*}
\item [\emph{(ii)}]For any $0<c<\bar{c}$, if $\chi(\cdot)\not\equiv0$, then
\begin{align*}
\displaystyle\liminf _{t \rightarrow \infty} \inf _{|x|<c t} w(x, t)=k.
\end{align*}
\end{itemize}
\end{proposition}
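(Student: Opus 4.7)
The plan is to prove both statements by comparison, exploiting the KPP structure $rw(k-w)\le rkw$ together with the explicit spreading-speed formula for the linearized equation. Assumptions $(J1)$--$(J2)$ ensure that $\int_{\R}J(y)\ee^{\lambda y}\dy$ is finite for every $\lambda\in\R$ and that the nonlocal semiflow generated by \eqref{4} enjoys a standard comparison principle.

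For part (i), I would note that $w\ge 0$ is a subsolution of the linear equation $\partial_tW=d(J\ast W-W)+rkW$. By the definition of $\bar{c}$, for any $c>\bar{c}$ there exist $c'\in(\bar{c},c)$ and $\lambda>0$ with $c'\lambda>d[\int_{\R}J(y)\ee^{\lambda y}\dy-1]+rk$. Then $W_{+}(x,t):=M\ee^{-\lambda(x-c't)}$ is a supersolution of the linearized equation; using the compact support of $\chi$, one selects $M$ large enough that $W_{+}(\cdot,0)\ge\chi$ pointwise. A symmetric construction $W_{-}(x,t):=M\ee^{\lambda(x+c't)}$ handles the left-moving front. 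Comparison yields $w\le W_{\pm}$, so for $|x|>ct$ one obtains $w(x,t)\le M\ee^{-\lambda(c-c')t}\to 0$ uniformly, which is (i).

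For part (ii), the plan has two stages. Fix $c\in(0,\bar{c})$ and pick $c'\in(c,\bar{c})$. First, I build a compactly supported, positive subsolution that travels with speed $c'$ in the moving frame: on a large interval $[-L,L]$, the principal eigenvalue (in the moving frame with speed $c'$) of the linear operator $\mathcal{L}\varphi:=d(J\ast\varphi-\varphi)+rk\varphi$ is positive for $L$ large enough, precisely because $c'<\bar{c}$ in the dispersion relation defining $\bar{c}$; scaling the associated nonnegative eigenfunction by a sufficiently small constant yields a subsolution of \eqref{4}. Since $\chi\not\equiv 0$, the semiflow renders $w(\cdot,T)$ strictly positive on any prescribed compact set after some finite $T>0$, so an appropriate translate of the subsolution fits beneath $w(\cdot,T)$. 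Comparison then propagates this positive mass, giving some $\delta(c)>0$ with $\liminf_{t\to\infty}\inf_{|x|\le ct}w(x,t)\ge\delta(c)$. In a second stage, I would upgrade $\delta(c)$ to $k$ by a local-iteration / sandwich argument: once $w$ is bounded below by $\delta$ and above by $k$ on an expanding window, the convolution term becomes small relative to the reaction on interior subwindows, reducing the dynamics locally to the logistic ODE $w'=rw(k-w)$, whose globally stable equilibrium is $k$.

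The main obstacle will be the subsolution construction in part (ii), since the nonlocal operator lacks the smooth Dirichlet eigenfunction theory available for the Laplacian (principal eigenfunctions are only in $L^{\infty}$ and may be discontinuous at the endpoints of $[-L,L]$), and the monotonicity of the principal eigenvalue with respect to $L$ has to be proved by direct variational arguments. A cleaner alternative is to invoke the abstract spreading-speed theorem for monotone semiflows of Liang--Zhao type: the time-one map $S(1)$ associated with \eqref{4} is monotone, translation-invariant, subhomogeneous, and compact on bounded sets (compactness follows from $(J2)$ and Arzel\`a--Ascoli via a diagonal extraction), and its linearization at $0$ has asymptotic speed $\bar{c}$; the abstract machinery then delivers $\bar{c}$ as the nonlinear spreading speed directly, bypassing the explicit subsolution construction.
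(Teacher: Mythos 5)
This proposition is not proved in the paper; it is imported verbatim from Jin--Zhao~\cite{Jin}, so there is no internal proof to compare against. Your reconstruction of part (i) by exponential supersolutions is correct: $W_{\pm}(x,t)=M\ee^{\mp\lambda(x\mp c't)}$ with $\lambda$ chosen so that $\lambda c'>d\bigl[\int_{\R}J(y)\ee^{\lambda y}\dy-1\bigr]+rk$ is a supersolution of the linearized equation, $\chi\le W_{\pm}(\cdot,0)$ by compact support, and comparison gives the decay. The first stage of part (ii) (compactly supported nonlocal subsolution on a moving window, with the acknowledged caveats about discontinuous principal eigenfunctions) is also in the right spirit.

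However, your ``cleaner alternative'' rests on a false claim: the time-one map of a nonlocal dispersal equation is \emph{not} compact on bounded sets. Writing $w(\cdot,t)=\ee^{-dt}\chi+\int_{0}^{t}\ee^{-d(t-\tau)}\bigl[d(J\ast w)(\cdot,\tau)+rw(\cdot,\tau)(k-w(\cdot,\tau))\bigr]\d\tau$ makes this explicit: the first term is a fixed multiple of the identity and destroys any compactness; Arzel\`a--Ascoli cannot rescue it, since the semigroup $\ee^{tL}$ with $L=d(J\ast\cdot-\cdot)$ has no smoothing effect. This is precisely the obstruction the present paper names in its introduction (``the issue of the compactness of the set of solutions with bounded initial data'') and is why the authors must work so hard in Lemma~\ref{A-A} to obtain equicontinuity through differential inequalities rather than through regularizing estimates. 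Jin--Zhao in~\cite{Jin} in fact apply the Liang--Zhao framework in a version adapted to semiflows that are only $\alpha$-contractions (decomposable as ``non-expansive contraction plus compact''), not compact. Your second-stage heuristic for lifting the lower bound $\delta(c)$ to $k$ is also not quite right as stated---$J\ast w-w$ does not become small just because $w$ is trapped between $\delta$ and $k$; the standard argument instead extracts an entire solution $w_{\infty}\ge\delta$ on all of $\R^2$ via translation and diagonal extraction (exactly the machinery of Lemma~\ref{A-A}) and compares it from below with the spatially constant ODE solution, for which the convolution term genuinely vanishes, forcing $w_{\infty}\equiv k$.
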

We give the following lemma, which plays an important role in performing various limiting arguments.
\begin{lemma}\label{A-A}
Let $\bar{\alpha}=\max\{-\alpha(-\infty), 1\}$. For any $c\in(s, \underline{s})$, we assume that $(J1), (J2)$, $(\alpha 1)-(\alpha 3), (H1)$ and $d_{1}>r_{1}\bar{\alpha}+\frac{r_1a}{2}+\frac{r_2 b(b-1)}{2}$ and $d_{2}>r_2(b-1)+\frac{r_2 b(b-1)}{2}+\frac{ar_1}{2}$. For any initial data satisfying $(u_0, v_0)\in H$ with $u_0\not\equiv0$ and $v_0\not\equiv0$, the corresponding solution $(u, v)$ of \eqref{1} satisfies
$$(u, v)(x+c t_{n}, t+t_{n})\rightarrow(u_{\infty}, v_{\infty})(x, t)\text{ locally uniformly }\text{ as }n\rightarrow\infty,$$
where $\{t_{n}\}_{n\in\mathbb{Z}}$ be such that $t_{n}\rightarrow\infty$, as $n\rightarrow\infty$ and $(u_{\infty}, v_{\infty})(x, t)$ satisfies
\[
\left\{\begin{array}{l}
\frac{\partial u_{\infty}}{\partial t}(x, t)=d_{1}\left[\left(J_{1} * u_{\infty}\right)(x, t)-u_{\infty}(x, t)\right]+r_{1}u_{\infty}(x, t)\left[1-u_{\infty}(x, t)-av_{\infty}(x, t)\right], \\
\frac{\partial v_{\infty}}{\partial t}(x, t)=d_{2}\left[\left(J_{2} * v_{\infty}\right)(x, t)-v_{\infty}(x, t)\right]+r_2v_{\infty}(x, t)\left[-1+ bu_{\infty}(x, t)-v_{\infty}(x, t)\right].
\end{array}\right.
\]
\end{lemma}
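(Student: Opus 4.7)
The strategy is to apply Arzel\`a--Ascoli on a sequence of compacta $[-k,k]^2\subset\R\times\R$ followed by a Cantor diagonal extraction, and then to identify the limit by passing to the limit in the integral (Duhamel) formulation of \eqref{1}. Setting $(u_n,v_n)(x,t):=(u,v)(x+ct_n,\,t+t_n)$, the pair $(u_n,v_n)$ satisfies the shifted system in which $\alpha(x-st)$ is replaced by $\alpha(x+(c-s)t_n-st)$, so three uniform estimates on the family $\{(u_n,v_n)\}$ are needed: a uniform $L^\infty$ bound, equicontinuity in $t$, and equicontinuity in $x$.

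The first two are immediate. Positive invariance of $H$ under $\{S(t)\}_{t\ge 0}$ gives $0\le u_n\le 1$ and $0\le v_n\le b-1$ for every $n$, $x$, $t$. Using $(J1)$ to obtain $\|J_i*w\|_\infty\le\|w\|_\infty$, and inserting this bound together with $|\alpha|\le\bar\alpha$ into the right-hand side of \eqref{1}, one sees that $\partial_t u_n$ and $\partial_t v_n$ are bounded uniformly in $n$, which provides Lipschitz equicontinuity in $t$.

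The main obstacle is equicontinuity in $x$, and this is where the quantitative assumptions on $d_1,d_2$ come in. Because $J_i\in C^1$ is compactly supported, $(J_i*u)_x=J_i'*u$ is well defined whenever $u$ is bounded. Formally differentiating \eqref{1} in $x$, and writing $P=\partial_x u$, $Q=\partial_x v$, one obtains
\[
\partial_t P = -d_1 P + d_1(J_1'*u) + r_1(\alpha - 2u - av)P - r_1 a u\,Q + r_1 u\,\alpha',
\]
and the analogous equation for $Q$. Viewed pointwise in $x$, this is an ODE in $t$ whose multiplicative coefficient of $P$ is bounded above by $-(d_1-r_1\bar\alpha)$, and whose inhomogeneity is controlled by $C_1+r_1 a\,\mathcal Q(t)$ with $\mathcal Q(t)=\|Q(\cdot,t)\|_\infty$ and $C_1=d_1\|J_1'\|_{L^1}+r_1\|\alpha'\|_\infty$. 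A Duhamel estimate and $\sup_x$ yield the coupled integral inequality
\[
\mathcal P(t)\le e^{-(d_1-r_1\bar\alpha)t}\mathcal P(0)+\int_0^t e^{-(d_1-r_1\bar\alpha)(t-\sigma)}\bigl[C_1+r_1 a\,\mathcal Q(\sigma)\bigr]\,d\sigma,
\]
together with its analog for $\mathcal Q(t)$ with rate $d_2-r_2(b-1)$, constant $C_2=(b-1)d_2\|J_2'\|_{L^1}$, and coupling coefficient $r_2 b(b-1)$. The hypotheses
\[
d_1>r_1\bar\alpha+\frac{r_1 a}{2}+\frac{r_2 b(b-1)}{2},\qquad d_2>r_2(b-1)+\frac{r_1 a}{2}+\frac{r_2 b(b-1)}{2}
\]
imply via AM--GM the spectral bound $(d_1-r_1\bar\alpha)(d_2-r_2(b-1))>r_1 a\cdot r_2 b(b-1)$, which makes the coupled linear system strictly dissipative and forces $\mathcal P(t)+\mathcal Q(t)$ to be bounded uniformly in $t$. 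Since the resulting long-time bound is independent of the initial values $\mathcal P(0)$, $\mathcal Q(0)$, a standard approximation of $(u_0,v_0)\in X\cap H$ by $C^1$ data together with continuous dependence of $S(t)$ on the initial datum transfers the uniform Lipschitz-in-$x$ estimate to the original solution for all $t$ bounded away from $0$; this suffices because $t_n\to\infty$.

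Equipped with these three estimates, Arzel\`a--Ascoli on each $[-k,k]^2$ combined with Cantor diagonal extraction produces a subsequence, still denoted $\{t_n\}$, along which $(u_n,v_n)\to(u_\infty,v_\infty)$ locally uniformly on $\R\times\R$. To identify the limit, note that $c>s$ implies $x+(c-s)t_n-st\to+\infty$ locally uniformly in $(x,t)$, so by $(\alpha_2)$ and the uniform continuity of $\alpha$ granted by $(\alpha_3)$, $\alpha(x+(c-s)t_n-st)\to 1$ locally uniformly. Passing to the limit in the Duhamel form of the shifted system---using the uniform $L^\infty$ bounds, the local uniform convergence of $(u_n,v_n)$, $J_i\in L^1$, and dominated convergence---then shows that $(u_\infty,v_\infty)$ satisfies the stated limiting equations.
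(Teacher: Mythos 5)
The overall architecture matches the paper's: shift the solution, uniform $L^\infty$ bounds via positive invariance of $H$, equicontinuity in $t$ from the ODE structure, a quantitative dissipativity condition on $d_1,d_2$ to force equicontinuity in $x$, then Arzel\`a--Ascoli with a diagonal extraction and identification of the limit via $\alpha_n\to 1$. The gap is in your treatment of equicontinuity in $x$.

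You formally differentiate \eqref{1} in $x$ and estimate $P=\partial_x u$, $Q=\partial_x v$ by Duhamel. But the nonlocal operator $J_i*\cdot-\cdot$ has no smoothing effect, so for $(u_0,v_0)\in H$ that are merely bounded and uniformly continuous, the solution $(u,v)$ need not be differentiable in $x$ at \emph{any} $t>0$; the quantities $P,Q$ may simply fail to exist. The approximation patch does not close this: continuous dependence in $L^\infty$ gives $\|u^{(m)}(\cdot,t)-u(\cdot,t)\|_\infty\le Ce^{Kt}\|u_0^{(m)}-u_0\|_\infty$ for some $K>0$, so approximating well at time $t$ requires $m$ large, which inflates $\mathcal P^{(m)}(0)$; to damp $e^{-\lambda t}\mathcal P^{(m)}(0)$ you then need $t$ large, which re-inflates the $e^{Kt}$ factor. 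No argument is given that this trade-off can be won, and in general it cannot. The paper sidesteps the issue entirely by working with the finite differences $\mathcal U_{n,\gamma}(x,t)=u_n(x+\gamma,t)-u_n(x,t)$ and $\mathcal V_{n,\gamma}(x,t)=v_n(x+\gamma,t)-v_n(x,t)$, which are always well defined, whose value at $t=-t_n$ is small by uniform continuity of the initial data, and to which a Gronwall estimate on the single Lyapunov quantity $\mathcal U_{n,\gamma}^2+\mathcal V_{n,\gamma}^2$ applies directly. Your spectral observation that the hypotheses on $d_1,d_2$ force $(d_1-r_1\bar\alpha)(d_2-r_2(b-1))>r_1a\cdot r_2b(b-1)$ via AM--GM is correct and plays the same dissipativity role as the paper's positivity of $k_1,k_2$; and your coupled Duhamel argument becomes rigorous if one systematically replaces the formal derivatives by the finite differences, with $d_1J_1'*u$ replaced by $d_1\int_\R\tilde J_1(x-y)u_n(y,t)\,\dy$ where $\tilde J_1(z)=J_1(z+\gamma)-J_1(z)$, controlled by $\|J_1'\|_{L^1}|\gamma|$, and $r_1u\alpha'$ replaced by the $\alpha_n$-difference, controlled via $(\alpha_3)$. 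As written, though, the passage from smooth data to general $(u_0,v_0)\in H$ is a genuine hole.
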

\begin{proof}
Let $\{t_{n}\}_{n\in\mathbb{Z}}$ be such that $t_{n}\rightarrow\infty$, as $n\rightarrow\infty$. Define
\[
\left\{\begin{array}{l}
u_{n}(x, t)=u\left(x+c t_{n}, t+t_{n}\right), \\
v_{n}(x, t)=v\left(x+c t_{n}, t+t_{n}\right),\\
\alpha_n(x-st)=\alpha(x-st+(c-s)t_n),
\end{array}\right.
\]
for $(x, t) \in \mathbb{R} \times\left[-t_{n},+\infty\right).$ It is clearly that $\left(u_{n}(x, t), v_{n}(x, t)\right)$ satisfies
\[
\left\{\begin{array}{l}
\frac{\partial u_{n}}{\partial t}(x, t)=d_{1}\left[\left(J_{1} * u_{n}\right)(x, t)-u_{n}(x, t)\right]+r_{1}u_{n}(x, t)\left[\alpha_n(x-st)-u_{n}(x, t)-av_{n}(x, t)\right], \\
\frac{\partial v_{n}}{\partial t}(x, t)=d_{2}\left[\left(J_{2} * v_{n}\right)(x, t)-v_{n}(x, t)\right]+r_2v_{n}(x, t)\left[-1+ bu_{n}(x, t)-v_n(x, t)\right], \\
u_{n}\left(x,-t_{n}\right)=u\left(x+c t_{n}, 0\right), v_{n}\left(x,-t_{n}\right)=v\left(x+c t_{n}, 0\right).
\end{array}\right.
\]
Next, we use some prior estimates of $(u_{n}(x, t), v_{n}(x, t))$ uniformly in $n$ to reach to the limit as $n\rightarrow +\infty$. Since 
$$
0\leq u(x, 0)=u_0\leq 1, 0\leq v(x, 0)=v_0\leq b-1,
$$
we have 
$$
0\leq u_{n}(x, -t_{n})\leq 1, 0\leq v_{n}(x, -t_{n})\leq b-1.$$ Hence, $0\leq u_{n}(x, t)\leq 1, 0\leq v_{n}(x, t)\leq b-1$. By assumptions $(\alpha_1)$ and $(\alpha_2)$, we have
\begin{equation}\label{m2.4}
	|\alpha_n(x-st)|\leq\max\{-\alpha(-\infty), 1\}=\bar{\alpha}.
\end{equation}
Therefore, there are positive constants $D_{i}, i=1, 2, \ldots, 4,$ such that for $(x, t)\in\mathbb{R}\times[-t_{n}, +\infty)$ and $n\in\mathbb{Z}$,
\[
\begin{array}{l}
\left|\frac{\partial u_{n}}{\partial t}\right| \leq d_{1}\left|J_{1} * u_{n}\right|+d_{1}\left|u_{n}\right|+r_{1}\left|u_{n}\right|\left[|\alpha_n(x-st)|+\left|u_{n}\right|+a\left|v_{n}\right|\right]\\
~~~~~~~\leq 2d_{1}+r_1(1+\bar{\alpha}+a(b-1))=: D_{1}, \\
\left|\frac{\partial v_{n}}{\partial t}\right| \leq d_{2}\left|J_{2} * v_{n}\right|+d_{2}\left|v_{n}\right|+r_2\left|v_{n}\right|\left[1+b\left|u_{n}\right|+\left|v_{n}\right|\right] \leq (b-1)\left(2d_{2}+2 br_2\right)=: D_{2}.
\end{array}
\]

It follows from assumption $(\alpha_3)$ that there exists a constant $M>0$ such that for any $n\in\mathbb{N}$
\begin{equation}\label{2.3m}
|(\alpha_n)_t|\leq M, \text{ for any }t\in\R.
\end{equation}
Therefore, by using \eqref{2.3m} and the assumption $c\in(s, \underline{s})$, we have that
\[
\begin{aligned}
\left|\frac{\partial^{2} u_{n}}{\partial t^{2}}\right|\leq &\left.d_{1} | J_{1} *(u_{n})_{t}|+d_{1}|(u_{n})_{t}|
+r_{1}|(u_{n})_{t}|\left[|\alpha_n(x-st)|+|u_{n}|+a|v_{n}|\right]\right.\\
&+r_{1}\left|u_{n}\right|\left[|-s*(\alpha_n)_t|+\left|\left(u_{n}\right)_{t}\right|)+a\left|(v_{n})_{t}\right|\right] \\
\leq& 2 d_{1} D_{1}+ r_{1}D_{1}(1+\bar{\alpha}+ab-a)+r_{1}(\underline{s}M+D_{1}+aD_{2})=: D_{3}, \\
\left|\frac{\partial^{2} v_{n}}{\partial t^{2}}\right|\leq& 2 d_{2} D_{2}+ 2br_2D_{2}+r_2(b-1)( bD_1+D_{2})=: D_{4}.
\end{aligned}
\]
For any $\gamma>0$, define
\[
\left\{\begin{array}{l}
\mathcal{U}_{n, \gamma}(x, t):=u_{n}(x+\gamma, t)-u_{n}(x, t), \\
\mathcal{V}_{n, \gamma}(x, t):=v_{n}(x+\gamma, t)-v_{n}(x, t), \\
\tilde{J}_{i}(x):=J_{i}(x+\gamma)-J_{i}(x), i=1,2.
\end{array}\right.
\]
Since $J_{i}$ satisfies (J1) and (J2), then $J_{i}^{\prime}\in L^{1}$, there exists $L_{i}>0, i=1,2,$ such that
\[
\begin{aligned}
\int_{\mathbb{R}}\left|\tilde{J}_{i}(x-y)\right| \dy &=\int_{\mathbb{R}}\left|J_{i}(x+\gamma-y)-J_{i}(x-y)\right| \dy \\
&=|\gamma| \int_{\mathbb{R}}\left|\int_{0}^{1} J_{i}^{\prime}(x-y+\theta \gamma) d \theta\right| \dy \\
& \leq|\gamma| \int_{0}^{1} \int_{\mathbb{R}}\left|J_{i}^{\prime}(x-y+\theta \gamma)\right| \dy \d\theta \leq L_{i}|\gamma|.
\end{aligned}
\]
By assumptions ($\alpha_1$) and ($\alpha_3$), there exists $L_3>0$ such that
$$|\alpha_n(x+\gamma-st)-\alpha_n(x-st)|\leq L_3|\gamma|.$$
Hence, for any $\eta> 0$, there exists $\delta_{i} = \frac{\eta}{L_{i}}>0$ $(i=1, 2, 3)$ such that
$$\int_{\mathbb{R}}|\tilde{J}_{i}(x-y)|\dy\leq\eta \text{ and }|\alpha_n(x+\gamma-st)-\alpha_n(x-st)|\leq\eta$$ provided that $|\gamma|\leq \delta_{i}, x\in\mathbb{R}, i = 1, 2, 3.$ Using \eqref{m2.4}, we can verify that
\begin{align}\label{33}
&\frac{\partial}{\partial t} \mathcal{U}_{n, \gamma}^{2}(x, t)=2 \mathcal{U}_{n, \gamma}(x, t) \frac{\partial \mathcal{U}_{n,
\gamma}}{\partial t} (x, t)\nonumber \\
&=2 \mathcal{U}_{n, \gamma}(x, t)\Big(d_{1} \int_{\mathbb{R}} \tilde{J}_{1}(x-y) u_{n}(y,
t)\dy-(d_{1}-r_1\alpha_n(x+\gamma-st))\mathcal{U}_{n, \gamma}(x, t)\nonumber\\
&\quad-r_{1} \mathcal{U}_{n, \gamma}(x, t)(u_{n}(x+\gamma, t)+u_{n}(x, t))+r_1u_n(x, t)(\alpha_n(x+\gamma-st)-\alpha_n(x-st))\nonumber\\
&\quad-r_1a v_{n}(x+\gamma, t)\mathcal{U}_{n, \gamma}(x, t)-r_1au_{n}(x, t) \mathcal{V}_{n, \gamma}(x, t)\Big) \nonumber\\
&\leq 2 \mathcal{U}_{n, \gamma}(x, t)\Big(d_{1} \int_{\mathbb{R}} \tilde{J}_{1}(x-y) u_{n}(y, t)\dy-(d_{1}-r_{1}\alpha_n(x+\gamma-st)) \mathcal{U}_{n, \gamma}(x, t)\nonumber \\
&\quad-r_{1} \mathcal{U}_{n, \gamma}(x, t)(u_{n}(x+\gamma, t)+u_{n}(x, t))+r_1u_n(x, t)(\alpha_n(x+\gamma-st)-\alpha_n(x-st))\Big)\nonumber
\\
&\quad+r_1au_{n}(x, t)\left(\mathcal{U}_{n, \gamma}^{2}(x, t)+\mathcal{V}_{n, \gamma}^{2}(x,
t)\right)\nonumber \\
&\leq 4 (d_{1}+r_1)\eta-2\left(d_{1}-r_{1}\bar{\alpha}-\frac{ar_1}{2}\right) \mathcal{U}_{n, \gamma}^{2}(x, t)+ar_1 \mathcal{V}_{n, \gamma}^{2}(x, t),
\end{align}
and
\begin{align}\label{34}
&\frac{\partial}{\partial t} \mathcal{V}_{n, \gamma}^{2}(x, t)=2 \mathcal{V}_{n, \gamma}(x, t) \frac{\partial \mathcal{V}_{n,
\gamma}}{\partial t} (x, t)\nonumber \\
&=2 \mathcal{V}_{n, \gamma}(x, t)\Big(d_{2} \int_{\mathbb{R}} \tilde{J}_{2}(x-y) v_{n}(y, t)\dy-(d_{2}+r_2) \mathcal{V}_{n, \gamma}(x, t)
\nonumber\\
&\quad-r_2(v_n(x+\gamma, t)+v_n(x, t))\mathcal{V}_{n, \gamma}(x, t)+r_2 bu_{n}(x, t)\mathcal{V}_{n,
\gamma}(x, t)+r_2 bv_{n}(x+\gamma, t)\mathcal{U}_{n, \gamma}(x, t)\Big)\nonumber\\
&\leq2 \mathcal{V}_{n, \gamma}(x, t)\Big(d_{2} \int_{\mathbb{R}} \tilde{J}_{2}(x-y) v_{n}(y, t)\dy-(d_{2}+r_2) \mathcal{V}_{n, \gamma}(x,
t)+r_2 bu_{n}(x, t)\mathcal{V}_{n, \gamma}(x, t)\Big)\nonumber\\
&\quad+r_2 bv_{n}(x+\gamma, t)(\mathcal{U}_{n, \gamma}^{2}(x, t)+\mathcal{V}_{n, \gamma}^{2}(x, t))\nonumber \\
&\leq 4 d_{2} \eta (b-1)  -2\left(d_{2}+r_2-r_2 b-\frac{r_2 b(b-1)}{2}\right) \mathcal{V}_{n, \gamma}^{2}(x, t)+r_2 b(b-1)
\mathcal{U}_{n, \gamma}^{2}(x, t).
\end{align}
Adding the two inequalities \eqref{33} and \eqref{34}, we deduce from the assumptions that $k_1:=d_{1}-r_{1}\bar{\alpha}-\frac{r_1a}{2}-\frac{r_2 b(b-1)}{2}>0$ and $k_2:=d_{2}+r_2-r_2b-\frac{r_2 b(b-1)}{2}-\frac{ar_1}{2}>0$. Then we obtain that
\begin{align}\label{35}
&\frac{\partial}{\partial t}(\mathcal{U}_{n, \gamma}^{2}(x, t)+\mathcal{V}_{n, \gamma}^{2}(x, t))\nonumber \\
& \leq 4\left(d_{1}+r_1+d_{2}(b-1)\right) \eta-2\left(d_{1}-r_{1}\bar{\alpha}-\frac{r_1a}{2}-\frac{r_2 b(b-1)}{2}\right) \mathcal{U}_{n,
\gamma}^{2}(x, t)\nonumber \\
& \quad-2\left(d_{2}+r_2-r_2b-\frac{r_2 b(b-1)}{2}-\frac{ar_1}{2}\right) \mathcal{V}_{n, \gamma}^{2}(x, t)\nonumber \\
&=4\left(d_{1}+r_1+d_{2}(b-1)\right) \eta-2k_{1} \mathcal{U}_{n, \gamma}^{2}(x, t)-2k_{2} \mathcal{V}_{n, \gamma}^{2}(x, t).
\end{align}
Let $k=\min \left\{k_{1}, k_{2}\right\}.$ Due to \eqref{35}, we have
\begin{align}\label{20}
&\frac{\partial}{\partial t}\left(\mathcal{U}_{n, \gamma}^{2}(x, t)+\mathcal{V}_{n, \gamma}^{2}(x, t)\right)\nonumber\\
\leq& 4\left(d_{1}+r_1+d_{2}(b-1)\right) \eta-2k\left(\mathcal{U}_{n, \gamma}^{2}(x, t)+\mathcal{V}_{n, \gamma}^{2}(x, t)\right).
\end{align}
Multiplying both sides of \eqref{20} by $\ee^{2k(t-s)}$ and integrating from $s$ to $t$, we have
\begin{align}\label{37}
&\left(\mathcal{U}_{n, \gamma}^{2}(x, t)+\mathcal{V}_{n, \gamma}^{2}(x, t)\right)\nonumber \\
\leq& \ee^{-2k(t-s)}\left(\mathcal{U}_{n, \gamma}^{2}(x, s)+\mathcal{V}_{n, \gamma}^{2}(x, s)\right)+4\left(d_{1}+r_1+d_{2}(b-1)\right) \eta \int_{s}^{t} \ee^{-2k(t-\theta)} d \theta.
\end{align}
Taking $s=-t_{n}$ from \eqref{37}, we get that
\begin{equation*}
\begin{aligned}
&\left(\mathcal{U}_{n, \gamma}^{2}(x, t)+\mathcal{V}_{n, \gamma}^{2}(x, t)\right) \\
 \leq& \ee^{-2k(t+t_{n})}\left(\mathcal{U}_{n, \gamma}^{2}\left(x,-t_{n}\right)+\mathcal{V}_{n, \gamma}^{2}\left(x,-t_{n}\right)\right)+\frac{2\left(d_{1}+r_1+d_{2}(b-1)\right) \eta}{k}.
\end{aligned}
\end{equation*}
That is,
\begin{equation*}\begin{aligned}
&\left|u_{n}\left(x+\gamma, t\right)-u_{n}\left(x, t\right)\right|^{2}+\left|v_{n}\left(x+\gamma, t\right)-v_{n}\left(x, t\right)\right|^{2} \\
&\leq\left|u_{n}(x+\gamma, -t_{n})-u_{n}(x, -t_{n})\right|^{2}+\left|v_{n}(x+\gamma, -t_{n})-v_{n}(x, -t_{n})\right|^{2}+\frac{2\left(d_{1}+r_1+d_{2}(b-1)\right) \eta}{k}.
\end{aligned}\end{equation*}
Since $u_{n}(x, -t_{n})$ and $v_{n}(x, -t_{n})$ are uniformly continuous for $x \in \mathbb{R}$, there exists $\delta_{4}>0$ such that $\left|u_{n}(x+\gamma, -t_{n})-u_{n}(x, -t_{n})\right| \leq \eta^{1 / 2}$ and $\left|v_{n}(x+\gamma, -t_{n})-v_{n}(x, -t_{n})\right|\leq \eta^{1 / 2}$ whatever $|\gamma| \leq \delta_{4}.$ Thus, there exists a positive constant $D_5$ and for any $\gamma >0$ such that $|\gamma| \leq \delta:=\min \left\{\delta_{1}, \delta_{2}, \delta_{3}, \delta_{4}\right\}$, we have that for all $x\in\mathbb{R}$ and $t>-t_{n}$
\[
\left\{\begin{array}{l}
\left|u_{n}(x+\gamma, t)-u_{n}(x, t)\right|^2 \leq\left(2+\frac{2\left(d_{1}+r_1+d_{2}(b-1)\right)}{k}\right) \eta:=D_{5}\eta, \\
\left|v_{n}(x+\gamma, t)-v_{n}(x, t)\right|^2 \leq\left(2+\frac{2\left(d_{1}+r_1+d_{2}(b-1)\right)}{k}\right) \eta:=D_{5}\eta.
\end{array}\right.
\]
Furthermore, there exist positive constants $D_6$ and $D_7$ such that for all $x\in\mathbb{R}$ and $t>-t_{n}$, it follows that
\begin{equation*}\begin{array}{l}
\left|\frac{\partial u_{n}}{\partial t}(x+\gamma, t)-\frac{\partial u_{n}}{\partial t}(x, t)\right| \\
\begin{aligned}
\leq & \Big| d_{1}\left(J_{1} *\left(u_{n}(x+\gamma, t)-u_{n}(x, t)\right)\right)-d_{1}\left(u_{n}(x+\gamma, t)-u_{n}(x, t)\right) \\
&-r_{1}\left(u_{n}(x+\gamma, t)+u_{n}(x, t)\right)\left(u_{n}(x+\gamma, t)-u_{n}(x, t)\right) \\
&-ar_1v_{n}(x+\gamma, t)\left(u_{n}(x+\gamma, t)-u_{n}(x, t)\right)-ar_1u_{n}(x, t)\left(v_{n}(x+\gamma, t)-v_{n}(x, t)\right) \\
&+\alpha_n(x+\gamma-st)r_1\left(u_{n}(x+\gamma, t)-u_{n}(x, t)\right)+r_1(\alpha_n(x+\gamma-st)-\alpha_n(x-st))u_n(x, t)\Big| \\
\leq &(2 d_{1}+2 r_{1}+ar_1+\bar{\alpha}r_1+ar_1b) D_{5}\eta+r_1\eta=: D_{6} \eta,
\end{aligned}
\end{array}\end{equation*}
and
$$\left|\frac{\partial v_{n}}{\partial t}(x+\gamma, t)-\frac{\partial v_{n}}{\partial t}(x, t)\right| \leq D_{7} \eta.$$
Since $c\in(s, \underline{s})$ and the assumption $(\alpha 2)$, we have that
$$\lim_{n\rightarrow\infty}\alpha_n(x-st)=\lim_{n\rightarrow\infty}\alpha(x-st+(c-s)t_n)=1, $$
locally uniformly with respect to $x\in\R$ and $t\in\R$. By the above prior estimates, the Arzel\`{a}-Ascoli theorem and a diagonal extraction process, we can extract a subsequence $t_{n}\rightarrow\infty$ such that
$$(u_{n}, v_n)(x, t)\rightarrow (u_{\infty}, v_{\infty})(x, t)\text{ locally uniformly }\text{as }n\rightarrow\infty,$$
where $(u_{\infty}, v_{\infty})(x, t)$ satisfies
\[
\left\{\begin{array}{l}
\frac{\partial u_{\infty}}{\partial t}(x, t)=d_{1}\left[\left(J_{1} * u_{\infty}\right)(x, t)-u_{\infty}(x, t)\right]+r_{1}u_{\infty}(x, t)\left[1-u_{\infty}(x, t)-av_{\infty}(x, t)\right], \\
\frac{\partial v_{\infty}}{\partial t}(x, t)=d_{2}\left[\left(J_{2} * v_{\infty}\right)(x, t)-v_{\infty}(x, t)\right]+r_2v_{\infty}(x, t)\left[-1+ bu_{\infty}(x, t)-v_{\infty}(x, t)\right].
\end{array}\right.
\]
We complete the proof of Lemma \ref{A-A}.
\end{proof}

\section{Survival of the prey $u$}
In this section, we consider the large time behavior of solutions of system \eqref{1} with initial value \eqref{2}, and more precisely deal with the persistence of the prey $u$ in the moving frames with speeds between $s$ and
$$
\underline{s}= \min\{s^*, s_*\} > s.
$$
\begin{theorem}[Uniform spreading of $u$]\label{uniform} 
Let $\bar{\alpha}=\max\{-\alpha(-\infty), 1\}$. Assume that $(J1), (J2)$, $(\alpha 1)-(\alpha 3), (H1)$ and $d_{1}>r_{1}\bar{\alpha}+\frac{r_1a}{2}+\frac{r_2 b(b-1)}{2}$ and $d_{2}>r_2(b-1)+\frac{r_2 b(b-1)}{2}+\frac{ar_1}{2}$. Let the initial data $(u_{0}, v_{0})\in H$ with $u_{0}\not\equiv0$ and $v_{0}\not\equiv0$ be given. If $s<\underline{s}$, then for any $\eta\in(0, (\underline{s}-s)/2)$ there exists $\varepsilon > 0$ such that
\begin{equation*}
\displaystyle\liminf _{t \rightarrow+\infty}\inf_{(s+\eta)t\leq|x|\leq(\underline{s}-\eta) t} u(x, t) \geq \varepsilon.
\end{equation*}
\end{theorem}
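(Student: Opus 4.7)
The strategy is to argue by contradiction: assume the infimum in the theorem is $0$, extract an entire solution of the limit system via the machinery of Lemma~\ref{A-A}, rule out this ``bad'' entire solution using a nonlocal strong maximum principle, and finally derive a quantitative contradiction using the KPP spreading speed $s^*$ of the prey.

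\textit{Contradiction setup and limit extraction.} Suppose the conclusion fails. Using the symmetry of $J_1$ and $J_2$ to fix the sign of $x_n$, pick $t_n\to\infty$ and $x_n>0$ with $c_n:=x_n/t_n\in[s+\eta,\underline{s}-\eta]$ and $u(x_n,t_n)\to 0$; pass to a subsequence so $c_n\to c_\star\in[s+\eta,\underline{s}-\eta]$. Put
\[
u_n(x,t):=u(x+x_n,t+t_n),\qquad v_n(x,t):=v(x+x_n,t+t_n),
\]
and $\alpha_n(x-st):=\alpha(x+(c_n-s)t_n-st)$. The prior estimates established in the proof of Lemma~\ref{A-A}---the uniform $L^\infty$ bounds, the bounds on $\partial_t$ and $\partial_{tt}$, and the finite-difference control obtained by adding the two Gronwall-type inequalities---depend on the shift speed only through its membership in a compact subset of $(s,\underline{s})$, so they transfer verbatim to a varying shift speed $c_n$. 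An Arzel\`a--Ascoli argument combined with a diagonal extraction yields a subsequence along which $(u_n,v_n)\to(u_\infty,v_\infty)$ locally uniformly on $\mathbb{R}^2$, where $(u_\infty,v_\infty)$ solves the limit system of Lemma~\ref{A-A} and satisfies $0\le u_\infty\le 1$, $0\le v_\infty\le b-1$, and $u_\infty(0,0)=0$.

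\textit{Vanishing of the limit.} Since $u_\infty\ge 0$ attains the value $0$ at $(0,0)$, $\partial_t u_\infty(0,0)=0$; the first equation then forces $(J_1*u_\infty)(\cdot,0)(0)=0$. As $J_1,u_\infty\ge 0$ and $\operatorname{supp} J_1$ (nonempty and symmetric by (J1)--(J2)) additively generates $\mathbb{R}$, iterating the pointwise identity propagates $u_\infty(\cdot,0)\equiv 0$ on $\mathbb{R}$. Because the nonlocal operator $L_1 w:=d_1(J_1*w-w)$ is bounded on $X$, the limit system is an ODE in the Banach space $X^2$ with locally Lipschitz vector field and hence admits unique solutions forward \emph{and} backward in time; comparison with the ansatz $(0,V(t))$, where $V$ is the unique solution of $V_t=d_2(J_2*V-V)-r_2V-r_2V^2$ with $V(0)=v_\infty(\cdot,0)$, therefore forces $u_\infty\equiv 0$ on all of $\mathbb{R}^2$. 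A Gronwall estimate applied to $W(t):=\|V(\cdot,t)\|_\infty$ yields $W(t)\le W(t_0)e^{-r_2(t-t_0)}$; sending $t_0\to-\infty$ with $W\le b-1$ bounded gives $V\equiv 0$, hence $v_\infty\equiv 0$.

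\textit{Contradiction via a KPP sub-solution.} The vanishing $(u_\infty,v_\infty)\equiv(0,0)$ says $(u,v)$ is arbitrarily small on every fixed spacetime box around $(x_n,t_n)$ for $n$ large. To contradict this I would fall back on the persistence of $u$ in a strictly slower moving frame established by Choi et al.~\cite{Choi21}: fix a speed $c^\flat$ in that persistence range with $c^\flat<c_\star$ so that $u(c^\flat t,t)\ge\varepsilon_0$ for $t$ large. With $T_n:=(c_\star-c^\flat+o(1))(s^*-c^\flat)^{-1}t_n$ (so $T_n<t_n$ since $c_\star<\underline{s}\le s^*$) and $\tau_n:=t_n-T_n$, launch a compactly supported KPP sub-solution $w$ of $w_t=d_1(J_1*w-w)+r_1w(1-\delta-w)$ at $(c^\flat\tau_n,\tau_n)$ with height $\varepsilon_0/2$; the effective growth rate $1-\delta$ reflects $\alpha\to 1$ and a uniform upper bound $av\le\delta$ along the corridor. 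Since this sub-solution spreads at speed arbitrarily close to $s^*>c_\star$ as $\delta\downarrow 0$, it reaches $(x_n,t_n)$ with a value bounded below by a fixed positive constant, contradicting $u(x_n,t_n)\to 0$.

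\textit{Main obstacle.} The delicate point is securing $av\le\delta$ throughout the corridor swept by the sub-solution, which has length of order $t_n$: the locally uniform vanishing $(u_n,v_n)\to(0,0)$ produced in the second step furnishes smallness of $v$ only on bounded neighborhoods of $(x_n,t_n)$, whereas the corridor grows linearly with $n$. Closing this gap requires iterating the limit-extraction procedure at a family of intermediate reference points $(\sigma t,t)$, $\sigma\in[c^\flat,c_\star]$, and running a further diagonal extraction to upgrade the pointwise smallness of $v$ near $(x_n,t_n)$ to uniform smallness throughout the corridor---this is the technical heart of the argument and parallels the bootstrap idea used by Zhang et al.~\cite{Zhang}.
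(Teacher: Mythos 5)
Your proposal correctly reproduces the limit-extraction machinery of Lemma~\ref{A-A} and the strong-maximum-principle / decay argument showing $(u_\infty,v_\infty)\equiv(0,0)$, but the step that would actually produce the contradiction is exactly the one you flag as the ``main obstacle,'' and you do not close it. The difficulty is real: after extracting the entire limit solution you only know that $(u,v)$ is small on bounded neighbourhoods of the points $(x_n,t_n)$, and a KPP sub-solution launched at $(c^\flat\tau_n,\tau_n)$ must traverse a corridor of length $O(t_n)$ where you have no uniform control of $av$. Attempting to fix this by ``iterating the limit extraction at a family of intermediate reference points'' is essentially trying to prove the theorem inside its own proof, and as written it is circular: uniform smallness of $v$ along a corridor of linearly growing length is not a consequence of the locally uniform convergence you have established.

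The paper avoids this wall by a three-stage bootstrap rather than a direct contradiction. In Lemma~\ref{pointwise-u} the contradiction hypothesis is stated with a $\limsup$ over a sequence of initial data, i.e.\ $\lim_{n}\limsup_{t\to\infty}u_n(ct,t)=0$; this is strictly stronger than vanishing at a sequence of points, and it is precisely what is needed to prove $\lim_{n}\sup_{t\ge t_n,\,|x-ct|\le R}v_n(x+ct,t)=0$ (the claim \eqref{5.5}), i.e.\ uniform smallness of $v$ for \emph{all} $t\ge t_n$ on the moving window $|x-ct|\le R$. With that in hand the paper never sends a sub-solution across a long corridor: it builds a compactly supported sub-solution $\phi_{R,\beta,\eta}$ whose support is the \emph{fixed-width} window $(-R+ct,R+ct)$, travelling with the observation speed $c<s^\ast$, whose amplitude grows like $\eta\,\ee^{r_1a\delta_1 t}$, contradicting $u\le 1$. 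Lemma~\ref{second step} then upgrades the $\limsup$ to a $\liminf$ by picking the last crossing time $\tau_n$ and invoking Proposition~\ref{spreading speed} (if $\tilde v\equiv 0$) or Lemma~\ref{c=0} (if $\tilde v\not\equiv 0$), and Theorem~\ref{uniform} finally converts pointwise to uniform spreading by applying Lemma~\ref{second step} at the speed $\underline{s}-\eta/2$ to manufacture a ``good'' earlier time $t'_{n,k}=x_{n,k}/(\underline{s}-\eta/2)$ and repeating the $\tau_n$ construction. So the missing idea in your proposal is not merely technical polish; it is the entire pointwise weak-spreading/pointwise-spreading ladder (Lemmas~\ref{pointwise-u}--\ref{second step}), and in particular the device of building a sub-solution that grows in amplitude while confined to a bounded moving window, which replaces your corridor argument.

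Two small further remarks. First, your propagation of $u_\infty(\cdot,0)\equiv 0$ via iterated convolutions is fine but is exactly the nonlocal strong maximum principle the paper simply cites; no need to re-derive it. Second, in the contradiction setup you allow the frame speed $c_n=x_n/t_n$ to vary and pass to $c_\star$; this is harmless for Lemma~\ref{A-A} but you would still need a version of the weak-spreading lemma that is uniform over compact subsets of $(s,\underline{s})$, which the paper obtains automatically from the structure of its three lemmas.
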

We divide into three steps to prove Theorem \ref{uniform}. We first use Lemma \ref{pointwise-u} to prove the ``pointwise weak spreading" which illustrates that the $u$-component of the solution of system \eqref{1} does not converge to $0$. Then we apply Lemma \ref{second step} to show the ``pointwise spreading" which means that the solution is bounded along with the path $x=ct$ by some constant $\varepsilon>0$ as $t\rightarrow+\infty.$ We complete the proof of Theorem \ref{uniform} by showing that the spreading is in fact uniform in the intermediate range between the moving frames with speeds $s$ and $\underline{s}=\min\{s_*, s^*\}$.
\begin{lemma}\label{pointwise-u}
(Pointwise weak spreading) Assume that $s<\underline{s}.$ Then for any $c\in(s, \underline{s})$, there exists $\varepsilon_{1}(c)>0$ such that for any $(u_{0}, v_{0})\in H$ with $u_{0}\not\equiv0$ and $v_{0}\not\equiv0$, the corresponding solution $(u, v)$ of \eqref{1} satisfies
$$\limsup_{t\rightarrow\infty}u(ct, t)\geq\varepsilon_{1}(c).$$
\end{lemma}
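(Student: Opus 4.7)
The plan is to argue by contradiction, extracting an entire solution of the autonomous limiting system via Lemma~\ref{A-A} and deriving a clash with the prey spreading speed in Proposition~\ref{spreading speed}. Suppose, for some $c\in(s,\underline{s})$, that no such $\varepsilon_1(c)>0$ exists. Negating the statement produces admissible data $(u_0^n,v_0^n)\in H$ with $u_0^n,v_0^n\not\equiv 0$ and times $t_n\to\infty$ such that the corresponding solutions $(u^n,v^n)$ of \eqref{1} satisfy $u^n(ct_n,t_n)\to 0$. Because the a priori equicontinuity estimates in the proof of Lemma~\ref{A-A} depend only on the bounds $0\le u^n\le 1$, $0\le v^n\le b-1$ and on the parameters, they are uniform in $n$; Arzel\`a--Ascoli with a diagonal extraction then yields, up to a subsequence,
\[
\bigl(u^n(x+ct_n,t+t_n),\,v^n(x+ct_n,t+t_n)\bigr)\longrightarrow(u_\infty,v_\infty)(x,t)\quad\text{locally uniformly on }\mathbb{R}^2,
\]
where $(u_\infty,v_\infty)$ is an entire solution of the autonomous system displayed in Lemma~\ref{A-A} satisfying $0\le u_\infty\le 1$, $0\le v_\infty\le b-1$ and $u_\infty(0,0)=0$.

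I would next show $(u_\infty,v_\infty)\equiv(0,0)$. Since $u_\infty\ge 0$ attains its global minimum at $(0,0)$, the first limiting equation evaluated there reduces to $0=d_1(J_1*u_\infty)(0,0)$, forcing $u_\infty(\cdot,0)$ to vanish on $\mathrm{supp}(J_1)$. Iterating this nonlocal strong minimum principle along translates of $\mathrm{supp}(J_1)$ yields $u_\infty(\cdot,0)\equiv 0$ on $\mathbb{R}$, and forward positivity of the nonlocal semiflow together with a backward argument exploiting the boundedness of $u_\infty$ extends this to $u_\infty\equiv 0$ on $\mathbb{R}^2$. With $u_\infty\equiv 0$, the $v_\infty$-equation implies
\[
\partial_t v_\infty \le d_2(J_2*v_\infty-v_\infty)-r_2 v_\infty,
\]
and the standard nonlocal semigroup estimate yields $\|v_\infty(\cdot,t)\|_\infty\le e^{-r_2(t-t_0)}(b-1)$ for every $t\ge t_0$; sending $t_0\to-\infty$ forces $v_\infty\equiv 0$.

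To close the contradiction I would exploit the local uniform smallness of $(u^n,v^n)$ on a moving window. For any $\eta>0$ and any $R>0$, local uniform convergence provides, for $n$ large, both $u^n(x,t)\le\eta$ and $v^n(x,t)\le\eta$ whenever $t\in[t_n,t_n+T_0]$ and $|x-ct|\le R$; on the same window $\alpha(x-st)\ge 1-\eta$ since $x-st\ge (c-s)t-R\to+\infty$. Thus in that window $u^n$ is a super-solution of the scalar nonlocal KPP equation
\[
\partial_t w = d_1(J_1*w-w)+r_1 w\bigl(1-\eta(1+a)-w\bigr),
\]
whose spreading speed $s^*_\eta$ tends to $s^*$ as $\eta\to 0^+$; since $c<\underline{s}\le s^*$, I can choose $\eta$ so that $s^*_\eta>c$. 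Comparing $u^n$ with a compactly supported sub-solution of this KPP equation launched inside the window, Proposition~\ref{spreading speed}(ii) forces $u^n(c\tau,\tau)$ to remain bounded below by a positive constant for $\tau$ sufficiently large, contradicting $u^n(ct_n,t_n)\to 0$. The main obstacle will be this last step: the favorable KPP lower bound is only available inside the moving window $|x-ct|\le R$, whereas Proposition~\ref{spreading speed} is a global statement on $\mathbb{R}$. I expect to handle this by letting $R$ grow with $n$ so that the compactly supported sub-solution, whose support spreads at speed $s^*_\eta>c$, remains inside the window throughout the relevant time interval, and then restarting the sub-solution as the window moves.
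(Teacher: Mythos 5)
Your opening moves are broadly aligned with the paper's (argue by contradiction, extract an entire solution of the autonomous limit system via Lemma~\ref{A-A}, kill $u_\infty$ by the strong maximum principle, kill $v_\infty$ by the exponential decay estimate), but your negation already loses needed information. The negation gives a sequence with $\limsup_{t\to\infty}u^n(ct,t)\to 0$, which is \emph{uniform-in-time} smallness of $u^n$ along $x=ct$ for $t$ large; retaining only a single time $t_n$ drops this. The paper keeps it and, via a \emph{secondary} contradiction/extraction argument (not one limit along $t_n$), upgrades it to $\lim_n\sup_{|x|\le R,\,t\ge t_n}u_n(x+ct,t)=0$ and then to $\lim_n\sup_{|x|\le R,\,t\ge t_n}v_n(x+ct,t)=0$. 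A single extraction only yields smallness of $(u^n,v^n)$ on a bounded time window $[t_n,t_n+T_0]$; the uniform-in-time version is what licenses a comparison over an unbounded time interval below.

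That last step is the genuine gap. You propose to launch a compactly supported sub-solution of the favorable KPP equation inside the moving window $|x-ct|\le R$ and then invoke Proposition~\ref{spreading speed}(ii). But such a sub-solution spreads at speed $s^*_\eta>c$, so it necessarily leaves the window, outside of which you have no lower bound on $u^n$; as you note yourself, Proposition~\ref{spreading speed} is a whole-line result and cannot be applied directly. The fix you sketch --- let $R$ grow with $n$ and ``restart'' the sub-solution --- is not a well-defined argument (for a fixed $n$, the favorable region is bounded in space and, given the first gap, also in time). The paper's resolution is different in kind: it constructs a sub-solution $\phi(x,t)=\eta\,e^{r_1 a\delta_1 t}\,e^{-\beta(x-ct)}\cos\big(\tfrac{\pi(x-ct)}{2R}\big)$ whose support $[ct-R,ct+R]$ is \emph{fixed in the moving frame} (it does not spread relative to the window) but whose \emph{amplitude grows exponentially in time}. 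Verifying the sub-solution property reduces to choosing $\beta$ and $R$ so that conditions \eqref{59} and \eqref{70} hold, which is possible precisely because $c<s^*$; then $u_n(ct,t)\ge\phi(ct,t)=\eta\,e^{r_1 a\delta_1 t}$ for $t\ge t_n$, and the unbounded growth yields the contradiction. This compactly supported, non-spreading, amplitude-growing sub-solution is the crux of the proof and is absent from your proposal.
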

\begin{proof}
We argue by contradiction by assuming that there exists a sequence
\begin{align*}
 \{(u_{0,n}, v_{0,n})\}_{n\geq0}\in H,
\end{align*}
such that $u_{0,n}, v_{0,n}\not\equiv0$ and
\begin{equation}\label{500}
\lim_{n\rightarrow+\infty}\limsup_{t\rightarrow+\infty}u_{n}(ct, t)=0,
\end{equation}
where $(u_{n}, v_{n})$ is the solution of system \eqref{1} with initial value $(u_{0,n}, v_{0,n})$. 
Then we can choose a time sequence $t_n\rightarrow+\infty$ as $n\rightarrow\infty$ such that
\begin{equation*}
\lim_{n\rightarrow+\infty}\sup_{t\geq t_n}u_{n}(ct, t)=0.
\end{equation*}

Now we claim that for any $R>0$,
\begin{equation}\label{5.4}
\lim_{n\rightarrow+\infty}\sup_{|x|\leq R, t\geq t_n}u_{n}(x+ct, t)=0.
\end{equation}
Indeed, assume by contradiction that there exist sequences $x_n\in[-R, R]$ and $t_n'\geq t_n$ such that
\begin{equation}\label{3.4}
\liminf_{n\rightarrow+\infty}u_{n}(x_n+ct_n', t_n')>0.
\end{equation}
By the arguments similar to the Lemma \ref{A-A} and $c>s$, we can extract a subsequence such that the following convergence holds locally uniform in $(x, t)\in\mathbb{R}\times\mathbb{R}$
\begin{equation*}\left\{\begin{array}{l}
\displaystyle\lim _{n \rightarrow \infty} u_{n}\left(x+ct_{n}', t+t_{n}'\right) = u_{\infty}(x, t),\\
\displaystyle\lim _{n \rightarrow \infty} v_{n}\left(x+ct_{n}', t+t_{n}'\right) = v_{\infty}(x, t),
\end{array}\right.\end{equation*}
where the limit function $(u_{\infty}, v_{\infty})$ is an entire solution of the following system:
\begin{equation}\label{67}
\left\{\begin{array}{l}
{\frac{\partial u_{\infty}}{\partial t}(x, t)=d_{1}\left[\left(J_{1} * u_{\infty}\right)(x, t)-u_{\infty}(x, t)\right]+r_{1}u_{\infty}(x, t)\left[1-u_{\infty}(x, t)-av_{\infty}(x, t)\right],} \\
{\frac{\partial v_{\infty}}{\partial t}(x, t)=d_{2}\left[\left(J_{2} * v_{\infty}\right)(x, t)-v_{\infty}(x, t)\right]+r_2v_{\infty}(x, t)\left[-1+ bu_{\infty}(x, t)-v_{\infty}(x, t)\right].}
\end{array}\right.
\end{equation}
It is easy to see that $u_{\infty}\geq0$ and we deduce from \eqref{500} that $u_{\infty}(0, 0) = 0$. According to the strong maximum principle, we obtain that $u_{\infty}\equiv 0$. On the other hand, by \eqref{3.4}, we can also extract another subsequence $x_n\rightarrow x_{\infty}\in[-R, R]$ such that $u_{\infty}(x_{\infty},0)>0,$ a contradiction. Therefore, \eqref{5.4} holds.

Similarly, we claim that
\begin{equation}\label{5.5}
\lim_{n\rightarrow+\infty}\sup_{|x|\leq R, t\geq t_n}v_{n}(x+ct, t)=0.
\end{equation}
Indeed, if it is not true, then we can find an entire in time solution $(u_{\infty}, v_{\infty})$ of \eqref{67} with $u_{\infty}\equiv0$ and $v_{\infty}>0$.
Then
\begin{equation}\label{a}
\begin{aligned}
\frac{\partial v_{\infty}}{\partial t}(x, t)&=d_{2}\left[\left(J_{2} * v_{\infty}\right)(x, t)-v_{\infty}(x, t)\right]+r_2v_{\infty}(x, t)\left(-1-v_{\infty}(x, t)\right)\\
&\leq d_{2}\left[\left(J_{2} * v_{\infty}\right)(x, t)-v_{\infty}(x, t)\right]-r_2v_{\infty}(x, t),
\end{aligned}
\end{equation}
for all $x\in \R$ and $t\in\R.$ Let $\bar{v}_{\infty}(x, t)$ be the solution of
\begin{equation}\label{c}
\left\{\begin{array}{l}
{\frac{\partial \bar{v}_{\infty}}{\partial t}(x, t)=d_{2}\left[\left(J_{2} * \bar{v}_{\infty}\right)(x, t)-\bar{v}_{\infty}(x, t)\right]-r_2\bar{v}_{\infty}(x, t), x\in\R, t >-t_{0}, t_0\in\R,} \\
{\bar{v}_{\infty}(x, -t_0)=v_{\infty}(x, -t_0), x\in\R, t_0\in\R.}
\end{array}\right.
\end{equation}
It is easy to verify that for any $t_{0}\in\mathbb{R}$, the function $(x, t)\mapsto(b-1)\ee^{-r_2(t+t_{0})}$ is an upper solution of the above equation, for any $t>-t_0.$ By using \eqref{a} and the comparison principle, we have that
\begin{equation}\label{d}
v_{\infty}(x, t)\leq (b-1)\ee^{-r_2(t+t_{0})}, x\in\R, t>-t_0.
\end{equation}
Due to $v_{\infty}(x, -t_{0}) \leq b-1$ for any $t_{0}\in\mathbb{R}^{+}$ and \eqref{d}, we deduce that $v_{\infty}(x, 0)\leq (b-1)\ee^{-r_2t_0}$. Then we have $v_{\infty}(x, 0)\equiv0$ as $t_{0}\rightarrow\infty$. By the strong maximum principle, we obtain that $v_{\infty}\equiv 0$. This contradicts $v_{\infty}>0$. The claim \eqref{5.5} is now proved.

By \eqref{5.5}, for any $\delta_1>0$, there exists $n$ large enough such that
\begin{equation}\label{3.6b}
v_{n}(x, t)\leq\delta_1, \text{ for all }(x, t)\text{ such that }t\geq t_n, x\in(ct-R, ct+R).
\end{equation}
Since $\alpha(\infty)=1,$ and by \eqref{3.6b}, there exists $n$ large enough such that
\begin{equation}\label{3.6c}
\frac{\partial u_{n}}{\partial t}(x, t)\geq d_{1}\left[\left(J_{1} * u_{n}\right)(x, t)-u_{n}(x, t)\right]+r_{1}(1-u_{n}(x, t)-a\delta_1)u_{n}(x, t),
\end{equation}
for all $t\geq t_n$ and $x\in(ct-R, ct+R).$ On the other hand, set
\begin{equation}\label{3.6d}
Q[W](x, t):=d_{1}\int_\R J_1 (x-y) W(y, t)\dy-d_{1}W(x, t)+r_{1}W(x, t)(1-W(x, t)-a\delta_1)-\partial_{t}W(x, t),
\end{equation}
and
\begin{equation}\label{50}
L[W](x, t):=d_{1}\int_\R J_1 (x-y) W(y, t)\dy+m W(x, t)-\partial_{t}W(x, t),
\end{equation}
where $m$ will be determined later.
Let
\begin{equation}\label{52}
\phi(x, t):=\phi_{R,\beta, \eta}(x, t)=
\left\{\begin{array}{l}
\eta\ee^{r_1a\delta_1 t}\ee^{-\beta (x-ct)} \cos \left(\frac{\pi (x-ct)}{2R}\right),\quad \quad x\in(-R+ct, R+ct), t\in\R, \\
0, \quad\quad\quad\quad\quad\quad\quad\quad x\in\R \setminus(-R+ct, R+ct), t\in\R,
\end{array}\right.
\end{equation}
where $\eta\in(0, +\infty)$, $\beta>0$, and we assume that $\beta$ and $\eta$ are two independent constants. Next we show that $\phi(x, t)$ is a sub-solution of \eqref{3.6d}. In fact, we first claim that for $R$ large enough
\begin{align}\label{56}
J_1*\phi(x, t)&=\eta\int_{-R+ct}^{R+ct} J_1(x-y)\ee^{r_1a\delta_1 t} \ee^{-\beta (y-ct)} \cos \left(\frac{\pi (y-ct)}{2R}\right)\dy\nonumber\\
&=\eta\int_{-R}^{R} J_1(x-ct-y)\ee^{r_1a\delta_1 t} \ee^{-\beta y} \cos\left(\frac{\pi y}{2R}\right)\dy\nonumber \\
&\geq \eta\int_{-\infty}^{\infty} J_1(x-ct-y) \ee^{r_1a\delta_1 t}\ee^{-\beta y} \cos\left(\frac{\pi y}{2R}\right)\dy.
\end{align}
Indeed, due to anyhow $J_1*\phi(x, t)\geq0$, we take without loss of generality that $x\in (-R+ct, R+ct)$ and $t\in\R$. In order to make
\eqref{56} valid we have to show that for $y\in\R \setminus(-R+ct, R+ct)$ either $\cos\left(\frac{\pi y}{2R}\right)\leq 0$ or $J_1(x-ct-y)=0$.
We assume that $J_1$ has a compact support, then there exists $K$ such that supp $J_1\subset[-K, K]$. If $x\in(-R+ct, R+ct)$ and $|x-ct-y|\leq K$, then
$y\in (-R-K, R+K)\subset(-3R,3R)$ when $K\leq2R.$ Thus, we obtain that $\cos\left(\frac{\pi y}{2R}\right)\leq 0$ for $y\in[-3R, -R]\cup[R,
3R]$. Moreover, since supp $J_1\subset[-K, K]\subset(-2R, 2R)$ when $K<2R$, we obtain that $J_1(x-ct-y)=0$ for $y\in\R \setminus(-3R, 3R)$.
Thus, we deduce that
\begin{align*}
&\eta\int_{-\infty}^{\infty} J_1(x-ct-y)\ee^{r_1a\delta_1 t} \ee^{-\beta y} \cos\left(\frac{\pi y}{2R}\right)\dy\\
=&\left(\int_{-\infty}^{-3R}+\int_{-3R}^{-R}+\int_{-R}^{R}+\int_{R}^{3R}+\int_{3R}^{\infty}\right) J_1(x-ct-y) \eta\ee^{r_1a\delta_1 t}\ee^{-\beta y} \cos\left(\frac{\pi y}{2R}\right)\dy\\
\leq&\eta\int_{-R}^{R} J_1(x-ct-y)\ee^{r_1a\delta_1 t} \ee^{-\beta y} \cos \left(\frac{\pi y}{2R}\right)\dy,
\end{align*}
that is, \eqref{56} holds. Taking \eqref{52} into \eqref{50} and using \eqref{56}, we obtain that
\begin{align*}
&L[\phi](x, t)\nonumber\\
\geq &c\left[-\beta \phi-\frac{\pi\eta}{2R}\ee^{-\beta (x-ct)}\sin\left(\frac{\pi (x-ct)}{2R}\right)\ee^{r_1a\delta_1 t}\right]+(m-r_1a\delta_1)\phi\\
&+d_{1}\eta\int_{-\infty}^{\infty} J_1(x-ct-y) \ee^{-\beta y} \cos\left(\frac{\pi y}{2R}\right)\ee^{r_1a\delta_1 t}\dy\nonumber\\
=&\left[-c\beta+m-r_1a\delta_1+d_{1}\int_\R \ee^{\beta y} J_1(y) \cos \left(\frac{\pi y}{2R}\right) \dy \right]\phi\nonumber\\
&+\left[-\frac{\pi}{2R}c+d_{1}\int_\R \ee^{\beta y}J_{1}(y)\sin\left(\frac{\pi y}{2R}\right)\dy\right] \eta\ee^{-\beta (x-ct)}\ee^{r_1a\delta_1 t} \sin\left(\frac{\pi (x-ct)}{2R}\right).
\end{align*}
Therefore, $L\left[\phi\right]>0$ on $x\in[-R+ct, R+ct]$ and $t\in\R$ if the following two conditions are satisfied:
\begin{align}\label{59}
c<\frac{1}{\beta}\left[m-r_1a\delta_1+d_{1}\int_\R \ee^{\beta y} J_{1}(y) \cos \left(\frac{\pi y}{2R}\right)\dy\right]=: \mathcal{A}_{m}(\beta, R),
\end{align}
\begin{align}\label{70}
c=\frac{2Rd_{1}}{\pi}\left[\int_\R \ee^{\beta y} J_{1}(y) \sin\left(\frac{\pi y}{2R}\right)\dy\right]=: \mathcal{B}(\beta, R).~~~~~~
\end{align}
We first establish some properties of the functions $\mathcal{A}_{m}$ and $\mathcal{B}$. As $R \rightarrow \infty$, we have locally uniform convergence of
\[
\mathcal{A}_{m}(\beta, R) \rightarrow A_{m}(\beta)=\frac{m-r_1a\delta_1+d_{1}\int_\R \ee^{\beta y} J_{1}(y)\dy}{\beta}, \quad \mathcal{B}(\beta, R) \rightarrow B(\beta):=d_1\int_\R y \ee^{\beta y} J_{1}(y) \dy.
\]
Differentiation gives
\begin{align}\label{89}
&A_{m}^{\prime}(\beta)=\left(B(\beta)-A_{m}(\beta)\right) / \beta,\\
&B^{\prime}(\beta) =d_1\int_\R J(y) \ee^{\beta y} y^2\dy >0\nonumber.
\end{align}
It follows from the properties of the function $A_{m}(\beta)$ that it achieves infimum. Then, there exists $\beta^*>0$ such that $A_{m}(\beta^*)=\inf_{\beta>0}A_{m}(\beta)$. By the definition of $\beta^*$ and \eqref{89}, we obtain that
$B(\beta^*)=A_{m}(\beta^*).$ Since $B$ is an increasing function, thus $B(\beta)<B(\beta^*)$ for $0<\beta<\beta^*.$ Then we have
\begin{align}\label{15}
A_{m}(\beta)>A_{m}(\beta^*)=B(\beta^*)>B(\beta) \text{ for } 0<\beta<\beta^*.
\end{align}
In addition, we define $c^*:=A_{m^*}(\beta^*)$ with $m^*=r_1-r_1\delta_2-r_1a\delta_1-d_1$ and $r_1-r_1\delta_2-2r_1a\delta_1>0$ for small enough constants $\delta_1>0$, $\delta_2>0$. Since $0<s<c<\underline{s},$ we can choose $m<m^*$ such that $0<s< c<A_{m}(\beta^*)<A_{m^*}(\beta^*)=c^*$. Note that $B(0)<c^*$ and $B(0)=0$, we have that $c>s>B(0)=0$.
Then, combined with \eqref{15}, we can choose $c_1, c_2, \delta, R>0$ such that
$$B(c_1)+\delta<c<B(c_2)-\delta\text{ and }|\mathcal{B}(\beta, R)-B(\beta)|<\delta.$$
It follows from the continuity of $\mathcal{B}(\beta, R)$ and $B(\beta)$ that there exists some $\beta(R)$ such that $\mathcal{B}(\beta(R), R)=c$ for all large enough $R$. Obviously, we can choose $R$ large enough such that $\mathcal{A}_m(\beta(R), R) > c$. Thus, we prove that \eqref{70} and \eqref{59} hold true.

By the definition of $\phi_{R,\beta,\eta}(x, t)$, we obtain $L[\phi](x, t)>0$ for $(x, t)\in\R\times\R$. Note that $r_{1}W(1-W-a\delta_1)\geq (r_1-r_1\delta_2-r_1a\delta_1)W$ for $0\leq W\leq\delta_2$ and $m<r_1-r_1\delta_2-r_1a\delta_1-d_1$. Therefore, we have
$$
Q\left[\phi\right](x, t)>L\left[\phi\right](x, t)>0, \text{ for }(x, t)\in\R\times\R,
$$
namely,
\begin{equation*}
-d_{1}\int_\R J_1 (x-y) \phi(y, t)\dy-r_{1}\phi(x, t)(1-\phi(x, t)-a\delta_1)+\partial_{t}\phi(x, t)<0,\text{ for } (x, t)\in\R\times\R.
\end{equation*}
Using \eqref{3.6c} and taking $\eta$ small enough so that
$u_n(x,t_n)\geq \phi(x,t_n), \forall x\in\R$, we get by the comparison principle that
$$u_n(x, t)\geq \phi(x, t)$$
for all $t\geq t_n$ and $x\in\R.$ However, $\phi(ct, t)=\eta\ee^{r_1a\delta_1 t}\rightarrow+\infty$ as $t\rightarrow+\infty,$ which contradicts $0\leq u_n\leq 1$. We complete the proof of Lemma \ref{pointwise-u}.
\end{proof}
\begin{lemma}\label{c=0}
For any $c\in[0, \underline{s})$, there exists $\varepsilon_{1}'(c) >0$ such that, for any initial data satisfying $(u_0, v_0)\in H$ with $u_{0}\not\equiv0$ and $v_{0}\not\equiv0$, the solution $(u, v)$ of the following system
\begin{equation}\label{3.13}
\left\{\begin{array}{l}
{\frac{\partial u}{\partial t}(x, t)=d_{1}\left[\left(J_{1} * u\right)(x, t)-u(x, t)\right]+r_{1}u(x, t)\left[1-u(x, t)-av(x, t)\right],} \\
{\frac{\partial v}{\partial t}(x, t)=d_{2}\left[\left(J_{2} * v\right)(x, t)-v(x, t)\right]+r_2v(x, t)\left[-1+ bu(x, t)-v(x, t)\right].}
\end{array}\right.
\end{equation}
satisfies
$$
\limsup_{t\rightarrow+\infty}u(ct,t)\geq \varepsilon_{1}'(c).
$$
\end{lemma}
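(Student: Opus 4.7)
The plan is to run a contradiction argument along the lines of Lemma~\ref{pointwise-u}, but now applied directly to the autonomous system \eqref{3.13} in which $\alpha\equiv 1$. Suppose the conclusion fails; then there is a sequence $(u_{0,n},v_{0,n})\in H$ with $u_{0,n},v_{0,n}\not\equiv 0$ such that $\lim_{n\to\infty}\limsup_{t\to\infty}u_n(ct,t)=0$, where $(u_n,v_n)$ denotes the corresponding solution of \eqref{3.13}. Picking $t_n\to\infty$ with $\sup_{t\ge t_n}u_n(ct,t)\to 0$, the first step is to upgrade this pointwise decay into uniform decay on a moving ball: for every $R>0$,
\[
\lim_{n\to\infty}\sup_{|x|\le R,\,t\ge t_n}\bigl(u_n(x+ct,t)+v_n(x+ct,t)\bigr)=0.
\]
The prior estimates used in Lemma~\ref{A-A}, adapted to the autonomous case (which is simpler since the $\alpha_n$-terms drop and the restriction $c>s$ is not needed), together with Arzel\`a--Ascoli and a diagonal extraction produce a subsequential entire solution $(u_\infty,v_\infty)$ of \eqref{3.13} as a locally uniform limit of $(u_n(x+ct_n',t+t_n'),v_n(x+ct_n',t+t_n'))$. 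The assumption gives $u_\infty(0,0)=0$, hence $u_\infty\equiv 0$ by the strong maximum principle for the nonlocal KPP equation. Substituting into the $v_\infty$-equation and comparing against the supersolution $(b-1)\ee^{-r_2(t+t_0)}$, then letting $t_0\to\infty$, forces $v_\infty\equiv 0$; this is exactly the scheme used in Lemma~\ref{pointwise-u}.

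With this uniform decay, for any small $\delta_1>0$ one has $v_n(x,t)\le\delta_1$ on $\{t\ge t_n,\,|x-ct|\le R\}$ for large $n$, so that $u_n$ satisfies the nonlocal KPP inequality
\[
\partial_t u_n\ge d_1[J_1*u_n-u_n]+r_1 u_n(1-u_n-a\delta_1)
\]
on this cylinder. I reuse the exact family $\phi_{R,\beta,\eta}(x,t)=\eta \ee^{r_1a\delta_1 t}\ee^{-\beta(x-ct)}\cos\bigl(\pi(x-ct)/2R\bigr)$ (extended by $0$ outside $|x-ct|\le R$) constructed in Lemma~\ref{pointwise-u}. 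Its subsolution property reduces to finding $m<m^*:=r_1-r_1\delta_2-r_1a\delta_1-d_1$ and $\beta=\beta(R)$ such that $\mathcal{B}(\beta,R)=c$ and $\mathcal{A}_m(\beta,R)>c$. Since $A_{m^*}(\beta^*)$ approaches the spreading speed $s^*$ as $\delta_1,\delta_2\to 0^+$ and $c<\underline{s}\le s^*$, both conditions can be met simultaneously for well-chosen $\delta_1,\delta_2,m$ and a large $R$. For $c\in(0,\underline{s})$, the evenness of $J_1$ gives $B(0)=0<c<B(\beta^*)=A_m(\beta^*)$, and $\mathcal{B}(\beta,R)=c$ is solvable for some $\beta(R)\in(0,\beta^*)$ by intermediate value for large $R$, exactly as in Lemma~\ref{pointwise-u}. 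The new case $c=0$ is handled the same way: again by evenness, $\mathcal{B}(0,R)=0=c$, so one may take $\beta(R)=0$ and $\phi(x,t)=\eta \ee^{r_1a\delta_1 t}\cos(\pi x/2R)$, for which the inequality $L[\phi]\ge 0$ reduces to $d_1\int_{\mathbb{R}}J_1(y)\cos(\pi y/2R)\,\dy+m-r_1a\delta_1\ge 0$, valid for large $R$ and $m$ close to but below $m^*$.

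To finish, one shrinks $\eta>0$ so that $\phi_{R,\beta(R),\eta}(\cdot,t_n)\le u_n(\cdot,t_n)$ on $\mathbb{R}$; the comparison principle applied to the KPP inequality above then yields $u_n(x,t)\ge\phi_{R,\beta(R),\eta}(x,t)$ for all $t\ge t_n$ and $x\in\mathbb{R}$. Evaluating at $x=ct$,
\[
u_n(ct,t)\ge\eta \ee^{r_1a\delta_1 t}\to+\infty\qquad\text{as }t\to+\infty,
\]
contradicting $u_n\le 1$. The main obstacle is verifying that the choice of $\beta(R)$ and $m$ in the subsolution remains admissible all the way down to $c=0$; this is where the evenness of $J_1$ from hypothesis (J1) is used essentially, ensuring $\mathcal{B}(0,R)=B(0)=0$ and letting the construction extend continuously across the endpoint.
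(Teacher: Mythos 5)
Your proposal is correct and follows precisely the route the paper intends: the paper's proof of Lemma \ref{c=0} is simply ``similar to Lemma \ref{pointwise-u}, so we omit it,'' and your writeup is the missing fill-in. You correctly identify the two places where the argument must be adapted: (i) the limiting/Arzel\`a--Ascoli step no longer needs $c>s$ because system \eqref{3.13} is already autonomous (no $\alpha_n$ term to push to its limit $1$), and (ii) the subsolution construction $\phi_{R,\beta,\eta}$ must be checked down to the endpoint $c=0$, where the evenness of $J_1$ gives $B(0)=\mathcal{B}(0,R)=0$ so that one may take $\beta(R)=0$ and the condition $L[\phi]>0$ reduces to $d_1\int J_1(y)\cos(\pi y/2R)\,\dy + m - r_1a\delta_1>0$, achievable for $R$ large and $m$ near $m^*$ under the standing smallness of $\delta_1,\delta_2$. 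The rest — uniform decay on a moving ball, $u_\infty\equiv0$ by the strong maximum principle, $v_\infty\equiv0$ via the supersolution $(b-1)\ee^{-r_2(t+t_0)}$, the KPP differential inequality for $u_n$ on the cylinder, and the comparison argument yielding $u_n(ct,t)\geq\eta\ee^{r_1a\delta_1 t}\to\infty$ — matches the paper's Lemma \ref{pointwise-u} step by step.
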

\begin{proof}
  The proof of Lemma \ref{c=0} is similar to Lemma \ref{pointwise-u}, so we omit it.
\end{proof}

\begin{lemma}\label{second step}
(Pointwise spreading) Assume that $s<\underline{s}.$ Then for any $(u_{0}, v_{0})\in H$ with $u_{0}\not\equiv0$ and $v_{0}\not\equiv0$, for all $c\in(s, \underline{s})$, there exists $\varepsilon_{2}(c) > 0$ such that the corresponding solution $(u, v)$ of \eqref{1} satisfies
\begin{equation*}
\displaystyle\liminf _{t \rightarrow+\infty} u(ct, t) \geq \varepsilon_{2}\left(c\right).
\end{equation*}
\end{lemma}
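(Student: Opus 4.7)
I argue by contradiction. Suppose $\liminf_{t\to\infty}u(ct,t)=0$, so there is a sequence $t_n\to\infty$ with $u(ct_n,t_n)\to 0$. Applying Lemma \ref{A-A} to the shifted pair $(u_n,v_n)(x,t):=(u(x+ct_n,t+t_n),v(x+ct_n,t+t_n))$ yields, along a subsequence, locally uniform convergence to an entire-in-time solution $(u_\infty,v_\infty)$ of the limit system (with $\alpha\equiv 1$), and this limit satisfies $u_\infty(0,0)=0$. The strong maximum principle applied to the $u_\infty$-equation forces $u_\infty\equiv 0$, and then the $v_\infty$-equation is dominated by the upper solution $(b-1)e^{-r_2(t+t_0)}$, exactly as in the proof of Lemma \ref{pointwise-u}; letting $t_0\to\infty$ gives $v_\infty\equiv 0$.

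Set $\mu:=\tfrac14\min\{\varepsilon_1(c),\varepsilon'_1(c),1\}>0$, where $\varepsilon_1(c)$ and $\varepsilon'_1(c)$ are the constants produced by Lemmas \ref{pointwise-u} and \ref{c=0}. Since $\limsup_{t\to\infty}u(ct,t)\geq\varepsilon_1(c)\geq 4\mu$ by Lemma \ref{pointwise-u}, the closed set $\{s\leq t_n:u(cs,s)\geq 2\mu\}$ is nonempty for $n$ large, and its supremum $T_n^-$ satisfies $T_n^-<t_n$, $u(cT_n^-,T_n^-)=2\mu$ (by continuity and maximality), and $u(cs,s)<2\mu$ for every $s\in(T_n^-,t_n]$. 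The argument then splits on the behavior of the gap $t_n-T_n^-$.

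If $t_n-T_n^-$ stays bounded along a subsequence, then $\sigma_n:=T_n^--t_n\to\sigma^*\in(-\infty,0]$, and passing to the limit in $u_n(c\sigma_n,\sigma_n)=2\mu$ yields $u_\infty(c\sigma^*,\sigma^*)=2\mu>0$, contradicting $u_\infty\equiv 0$. Otherwise $t_n-T_n^-\to\infty$, and I apply Lemma \ref{A-A} again at $T_n^-$ to extract a locally uniform limit $(u_\infty',v_\infty')$, an entire solution of the limit system with $u_\infty'(0,0)=2\mu>0$; the inequality $u(cs,s)<2\mu$ on the growing interval $(T_n^-,t_n]$ passes to the limit as $u_\infty'(ct,t)\leq 2\mu$ for every $t\geq 0$. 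If $v_\infty'(\cdot,0)\not\equiv 0$, then Lemma \ref{c=0} applied to $(u_\infty'(\cdot,0),v_\infty'(\cdot,0))$ gives $\limsup_{t\to\infty}u_\infty'(ct,t)\geq\varepsilon'_1(c)\geq 4\mu$, a contradiction. If $v_\infty'(\cdot,0)\equiv 0$, forward uniqueness of the limit system gives $v_\infty'\equiv 0$, so $u_\infty'$ solves the scalar nonlocal Fisher--KPP equation with $u_\infty'(0,0)=2\mu>0$; sliding a nontrivial compactly supported sub-datum beneath $u_\infty'(\cdot,0)$ (possible by this strict positivity and the continuity of $u_\infty'(\cdot,0)$) and invoking Proposition \ref{spreading speed} with any $\bar c\in(c,\underline{s})\subset(0,s^*)$ forces $\liminf_{t\to\infty}u_\infty'(ct,t)\geq 1>2\mu$, once more a contradiction.

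The main obstacle, as I see it, is the unbounded-gap case $t_n-T_n^-\to\infty$: one has to produce a \emph{second} entire solution with strictly positive $u$-component and then rule it out using only the previously established pieces. The subcase $v_\infty'(\cdot,0)\equiv 0$ is particularly delicate, since Lemma \ref{c=0} requires a nontrivial predator; circumventing it via scalar Fisher--KPP spreading is what forces the detour through Proposition \ref{spreading speed} and a compactly supported sub-datum. Choosing $\mu$ once and for all as a quarter of $\min\{\varepsilon_1(c),\varepsilon'_1(c),1\}$ is what ensures the quantitative contradiction in every branch.
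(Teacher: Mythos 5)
Your proof is correct and follows the paper's own architecture essentially step for step: argue by contradiction, invoke Lemma \ref{pointwise-u} to find recurring high values, define the last crossing time $\tau_n$ (your $T_n^-$) below a fixed threshold before $t_n$, rule out a bounded gap via the limit $u_\infty\equiv 0$ at $t_n$, and in the unbounded-gap case pass to an entire solution at $\tau_n$ and split on whether $\tilde v$ vanishes, finishing with Lemma \ref{c=0} in one branch and Proposition \ref{spreading speed} in the other. Your choice of threshold $2\mu=\tfrac12\min\{\varepsilon_1(c),\varepsilon'_1(c),1\}$ is a small but genuine tightening: the paper uses $\varepsilon_1(c)/2$ and, in the branch $\tilde v\not\equiv 0$, declares ``this entire solution immediately contradicts Lemma \ref{c=0},'' which silently requires $\varepsilon_1(c)/2<\varepsilon'_1(c)$ — a relation not established a priori; taking the minimum over both constants makes the contradiction watertight regardless of their relative sizes. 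Likewise, your explicit comparison with a compactly supported sub-datum before invoking Proposition \ref{spreading speed} spells out a step the paper leaves implicit (the limit $\tilde u(\cdot,0)$ need not have compact support). The brief remark that $v_\infty\equiv 0$ at the $t_n$-limit is harmless but unused in your argument.
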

\begin{proof}
We argue by contradiction to prove this assertion, i.e. $u$ spreads away from $0$. We assume that there are  sequences $(u_{0,n}, v_{0,n})\in H$ with $u_{0, n}\not\equiv0$ and $v_{0, n}\not\equiv0$, such that
\begin{equation}\label{3.17b}
\lim_{n \rightarrow+\infty} u_{n}(ct_n, t_n)=0.
\end{equation}
By Lemma \ref{pointwise-u}, there exists another sequence $t_{n}'\rightarrow\infty$ such that
$$\lim_{n \rightarrow+\infty} u_{n}(ct_n', t_n')\geq \frac{\varepsilon_1(c)}{2},$$
and without loss of generality we can choose it so that $t_n'<t_n$ for any $n$. We define
$$\tau_n:=\sup\left\{t_n^{\prime} \leq t \leq t_n|u_{n}(ct, t)\geq \frac{\varepsilon_1(c)}{2}\right\},$$
which follows that
\begin{equation}\label{3.17d}
\forall t\in(\tau_n, t_n), u_{n}(ct, t)\leq \frac{\varepsilon_1(c)}{2}.
\end{equation}
Then this yields the following properties:
\begin{equation}\label{k}
\begin{aligned}
&u_{n}(c\tau_n, \tau_n)= \frac{\varepsilon_1(c)}{2},\\
&u_{n}(ct, t)\leq \frac{\varepsilon_1(c)}{2}, \quad t\in(\tau_n, t_n),\\
&u_{n}(c(\tau_n+t_n-\tau_n), \tau_n+t_n-\tau_n)\rightarrow 0, \text{ as }n\rightarrow \infty.
\end{aligned}
\end{equation}
By Lemma \ref{A-A}, we can extract a subsequence $t_{n}\rightarrow\infty$ such that
$$(u_{n}, v_n)(x+ct_n, t+t_n)\rightarrow (u_{\infty}, v_{\infty})(x, t)\text{ locally uniformly }\text{as }n\rightarrow\infty,$$
where $(u_{\infty}, v_{\infty})(x, t)$ satisfies
\[
\left\{\begin{array}{l}
\frac{\partial u_{\infty}}{\partial t}(x, t)=d_{1}\left[\left(J_{1} * u_{\infty}\right)(x, t)-u_{\infty}(x, t)\right]+r_{1}u_{\infty}(x, t)\left[1-u_{\infty}(x, t)-av_{\infty}(x, t)\right], \\
\frac{\partial v_{\infty}}{\partial t}(x, t)=d_{2}\left[\left(J_{2} * v_{\infty}\right)(x, t)-v_{\infty}(x, t)\right]+r_2v_{\infty}(x, t)\left[-1+ bu_{\infty}(x, t)-v_{\infty}(x, t)\right].
\end{array}\right.
\]
From the choice of $t_n$, we have $u_{\infty}(0,0)=0,$ hence $u_{\infty}\equiv0$ by the strong maximum principle. In particular, the sequence $t_n-\tau_n$ is unbounded. Indeed, assume by contraction that $\lim_{n\rightarrow\infty} (t_n-\tau_n)=l<+\infty$, then it follows from the third formula of \eqref{k} that
$$\lim_{n\rightarrow+\infty}u_{n}(c(\tau_n+t), \tau_n+t)=0,\quad \forall t\in[0, t_n-\tau_n],$$
which contradicts the fact that
$$u_{n}(c\tau_n, \tau_n)=\frac{\varepsilon_1(c)}{2},$$
for $n$ large enough. Thus we obtain that $t_n-\tau_n\rightarrow+\infty,$ as $n\rightarrow+\infty.$
Therefore we can extract a subsequence such that
\begin{equation*}\left\{\begin{array}{l}
\displaystyle \tilde{u}(x, t)=\lim_{n \rightarrow+\infty} u_{n}(x+c\tau_{n}, \tau_{n}+t), \\
\displaystyle \tilde{v}(x, t)=\lim_{n \rightarrow+\infty} v_{n}(x+c\tau_{n}, \tau_{n}+t),
\end{array}\right.\end{equation*}
which are well defined as a result of global boundedness and a prior estimate. The pair $(\tilde{u}, \tilde{v})$ is a global in time solution of system \eqref{3.13}, and $\tilde{u} (0, 0) = \frac{\varepsilon_1(c)}{2}>0$.
In addition, we have
\begin{equation}\label{5.6}
\tilde{u} (ct, t) \leq\frac{\varepsilon_1(c)}{2}, \text{ for all }t\geq0.
\end{equation}
Notice that, when $\tilde{v}\not\equiv0,$ this entire solution immediately contradicts Lemma \ref{c=0}. Now it remains to consider the case when $\tilde{v}\equiv0.$ Then
$$\frac{\partial \tilde{u}}{\partial t}(x, t)=d_{1}\left[\left(J_{1} * \tilde{u}\right)(x, t)-\tilde{u}(x, t)\right]+r_{1}\tilde{u}(x, t)(1-\tilde{u}(x, t)),$$
which is the scalar nonlocal diffusion equation of KPP type. Recall that $c<\underline{s}=\min\{s_*, s^*\}\leq s^*,$ and $\tilde{u}(x, 0)\geq\not\equiv 0$,
then by Proposition \ref{spreading speed}, we have that
$$\tilde{u} (ct, t)\rightarrow1, \text{ as }t\rightarrow+\infty,$$
which contradicts \eqref{5.6}. Thus we complete the proof of Lemma \ref{second step}.
\end{proof}

\begin{proof}[Proof of Theorem \ref{uniform}]
We fix $\eta$ and argue by contradiction by assuming that there exist $\{t_{n, k}\}$ and  $\{x_{n, k}\}$ with $t_{n, k}\rightarrow+\infty$, as $k\rightarrow+\infty,$ and
$$x_{n, k}\in[(s+\eta)t_{n, k}, (\underline{s}-\eta) t_{n, k}],$$
such that
\begin{equation}\label{701}
u_n(x_{n, k}, t_{n,k})\leq\frac{1}{n},
\end{equation}
for any positive integers $n$ and $k$. However, applying Lemma \ref{second step}, one has that
\begin{equation*}
\liminf_{t \rightarrow+\infty}u_n((\underline{s}-\eta/2)t, t)\geq\varepsilon_2(\underline{s}-\eta/2).
\end{equation*}
In particular, we define another time sequence
$$t_{n,k}':= \frac{x_{n,k}}{\underline{s}-\eta/2}\in[0, t_{n,k}),\quad \forall n\geq0,\quad \lim_{k\rightarrow+\infty}t_{n,k}'=+\infty.$$
Then applying Lemma \ref{second step}, one has that
\begin{equation*}
u_n(x_{n,k}, t_{n,k}')\geq\frac{\varepsilon_2(\underline{s}-\eta/2)}{2},
\end{equation*}
for any $k$ large enough. For each $n$, we choose such a large $k$ and drop it from our notation for convenience. Then we can define
\[
\begin{array}{l}
\tau_{n}:=\sup \left\{t_n^{\prime} \leq t \leq t_n| u_{n}\left(ct, t\right) \geq \frac{\min\{\varepsilon_{1}'(c), \varepsilon_2(\underline{s}-\eta/2)\}}{2}\right\},
\end{array}
\]
where $\varepsilon_{1}'(c)$ comes from Lemma \ref{c=0}. Similar to the proof of Lemma \ref{second step}, we have that
$$t_n-\tau_n\rightarrow+\infty, \text{ as }n\rightarrow+\infty.$$
We consider the functions
\begin{equation*}
\displaystyle \tilde{u}_{n}(x, t)=u(x+c \tau_{n}, \tau_n+t),\quad\tilde{v}_{n}(x, t)=v(x+c \tau_{n}, \tau_n+t),
\end{equation*}
and
$$\displaystyle \lim_{n\rightarrow\infty}\tilde{u}_{n}(x, t):=\tilde{u}(x, t), \quad\lim_{n\rightarrow\infty}\tilde{v}_{n}(x, t):=\tilde{v}(x, t).$$
Then the following properties can be derived,
\[
\begin{array}{l}
\tilde{u}\left(0, 0\right) = \frac{\min\{\varepsilon_{1}'(c), \varepsilon_2(\underline{s}-\eta/2)\}}{2},\quad
\tilde{u}\left(0, t\right) \leq \frac{\min\{\varepsilon_{1}'(c), \varepsilon_2(\underline{s}-\eta/2)\}}{2}, \text{ for all }t\geq0.
\end{array}
\]
Regardless of whether $\tilde{v}\equiv0$ or $\tilde{v} > 0$, we reach a contradiction with either the result of
Proposition \ref{spreading speed}, or Lemma \ref{c=0}. This concludes the proof.

\end{proof}

\section{Survival of the predator $v$}
In this section, we show the persistence of the predator $v$ in the moving frames with speeds in the interval $(s, \underline{s})$, where we recall that $\underline{s}=\min\{s_*, s^*\}$.
\begin{theorem}[Uniform spreading of $v$]\label{uniform-v}
Let $\bar{\alpha}=\max\{-\alpha(-\infty), 1\}$. Assume that $(J1), (J2)$, $(\alpha 1)-(\alpha 3), (H1)$ and $d_{1}>r_{1}\bar{\alpha}+\frac{r_1a}{2}+\frac{r_2 b(b-1)}{2}$ and $d_{2}>r_2(b-1)+\frac{r_2 b(b-1)}{2}+\frac{ar_1}{2}$. Let initial data $(u_{0}, v_{0})\in H$ with $u_{0}\not\equiv0$ and $v_{0}\not\equiv0$ be given. If $s<\underline{s}$, then for any $\eta\in(0, (\underline{s}-s)/2)$ there exists $\varepsilon > 0$ such that
\begin{equation*}
\displaystyle\liminf _{t \rightarrow+\infty}\inf_{(s+\eta)t\leq|x|\leq(\underline{s}-\eta) t} v(x, t) \geq \varepsilon.
\end{equation*}
\end{theorem}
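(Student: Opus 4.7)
The plan is to mirror the three-step architecture used for Theorem \ref{uniform} (Lemma \ref{pointwise-u} $\to$ Lemma \ref{second step} $\to$ Theorem \ref{uniform}), but with the roles of prey and predator interchanged. First I would establish a \emph{pointwise weak spreading} lemma for $v$: for any $c\in(s,\underline{s})$, $\limsup_{t\to\infty} v(ct,t)\geq \varepsilon_1(c)>0$. Second, a \emph{pointwise strong spreading} lemma: $\liminf_{t\to\infty} v(ct,t)\geq \varepsilon_2(c)>0$. Third, a contradiction argument with a ``calibration'' time sequence $t_{n,k}':=x_{n,k}/(\underline{s}-\eta/2)$, exactly as in the proof of Theorem \ref{uniform}, upgrades pointwise to uniform spreading on the wedge $\{(s+\eta)t\leq|x|\leq(\underline{s}-\eta)t\}$.

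For the pointwise weak spreading I would argue by contradiction, assuming that there are initial data $(u_{0,n},v_{0,n})\in H$ with $u_{0,n},v_{0,n}\not\equiv 0$ and $\lim_{n\to\infty}\limsup_{t\to\infty}v_n(ct,t)=0$. Pick $t_n\to\infty$ with $\sup_{t\geq t_n}v_n(ct,t)\to 0$, and apply Lemma \ref{A-A} to extract a subsequential locally uniform limit $(u_\infty,v_\infty)$ of $(u_n(\cdot+ct_n,\cdot+t_n),v_n(\cdot+ct_n,\cdot+t_n))$, which solves \eqref{67}. Since $c\in(s,\underline{s})$, the trajectory $x+ct_n$ eventually sits in the wedge of Theorem \ref{uniform}, so $u_\infty\geq\varepsilon>0$ pointwise; combined with $v_\infty(0,0)=0$ and the strong maximum principle, $v_\infty\equiv 0$. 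Hence $u_\infty$ solves the scalar nonlocal KPP equation, and comparing it from below with the spatially homogeneous ODE solution $\underline{u}(t;T)$ of $\underline{u}'=r_1\underline{u}(1-\underline{u})$ starting at $\varepsilon$ at time $-T$, then letting $T\to\infty$, forces $u_\infty\equiv 1$. A second contradiction argument (as in the proof of Lemma \ref{pointwise-u} for (5.4)--(5.5)) then upgrades local convergence to: for every $\delta>0$ and $R>0$, if $n$ is large then $u_n(x,t)\geq 1-\delta$ and $v_n(x,t)\leq\delta$ on $\{|x-ct|\leq R,\ t\geq t_n\}$. In that region
$$\frac{\partial v_n}{\partial t}\geq d_2\big[(J_2\ast v_n)-v_n\big]+r_2 v_n\big(b-1-b\delta-v_n\big),$$
and the hypothesis $c<\underline{s}\leq s_*$ permits a compactly supported subsolution of the form
$$\phi_{R,\beta,\eta}(x,t)=\eta\,\ee^{\mu t}\,\ee^{-\beta(x-ct)}\cos\!\Big(\tfrac{\pi(x-ct)}{2R}\Big)\quad\text{on }|x-ct|<R,$$
with $\mu>0$ small, obtained by repeating verbatim the $\mathcal{A}_m/\mathcal{B}$ computation in Lemma \ref{pointwise-u} with $(r_1,d_1,J_1,s^*)$ replaced by $(r_2(b-1),d_2,J_2,s_*)$ and using the characterization \eqref{6} of $s_*$. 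Since $\phi(ct,t)=\eta\,\ee^{\mu t}\to\infty$ while $v_n\leq b-1$, this is the desired contradiction.

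For the pointwise strong spreading the argument parallels Lemma \ref{second step}: assuming $v_n(ct_n,t_n)\to 0$, use the weak spreading lemma to pick $\tau_n\in(t_n',t_n)$ with $v_n(c\tau_n,\tau_n)=\varepsilon_1(c)/2$ and $v_n(ct,t)\leq\varepsilon_1(c)/2$ for $t\in(\tau_n,t_n)$, verify $t_n-\tau_n\to\infty$, and translate by $(c\tau_n,\tau_n)$ to obtain via Lemma \ref{A-A} an entire solution $(\tilde u,\tilde v)$ of \eqref{3.13} with $\tilde v(0,0)=\varepsilon_1(c)/2$ and $\tilde v(ct,t)\leq\varepsilon_1(c)/2$ for all $t\geq 0$. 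If $\tilde u\not\equiv 0$, then an analogue of Lemma \ref{c=0} for $v$ (proved by the same subsolution construction as above, now applied directly to \eqref{3.13} and valid for every $c\in[0,\underline{s})$) contradicts the uniform bound; if $\tilde u\equiv 0$, then $\partial_t\tilde v\leq d_2[J_2\ast\tilde v-\tilde v]-r_2\tilde v$, and the supersolution $(b-1)\ee^{-r_2(t+t_0)}$ (as in (a)--(d) of Lemma \ref{pointwise-u}) forces $\tilde v\equiv 0$ after $t_0\to\infty$, contradicting $\tilde v(0,0)>0$. Finally, Theorem \ref{uniform-v} follows by repeating verbatim the proof of Theorem \ref{uniform} with $\varepsilon_1(c),\varepsilon_2(c)$ replaced by their predator counterparts.

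The hardest step is the first one: obtaining $u_\infty\equiv 1$ from only $u_\infty\geq\varepsilon$ via the nonlocal KPP Liouville-type comparison, then bootstrapping the pointwise limit to a uniform estimate on $\{|x-ct|\leq R,\ t\geq t_n\}$ strong enough to feed the subsolution construction. The subsolution for $v$ is more delicate than the one for $u$ in Lemma \ref{pointwise-u} because its positive exponential factor $\ee^{\mu t}$ does not come from $v$'s own logistic term (which is negative at low densities due to the $-1$) but only from the predation gain $b u_n$, so the computation is genuinely coupled and requires the trapping of $u_n$ near $1$ supplied by Theorem \ref{uniform}.
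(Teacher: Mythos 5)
Your proposal is correct and follows essentially the same three-step architecture as the paper's proof (pointwise weak spreading for $v$ in Lemma~\ref{pointwise}, pointwise spreading in Lemma~\ref{second step-v}, then the wedge/calibration argument to upgrade to uniformity), and the core mechanism at each step matches: use Theorem~\ref{uniform} to pin $u_\infty$ away from $0$ so that the Liouville argument forces $u_\infty\equiv 1$, then feed the resulting effective growth rate $r_2(b-1-\delta_1)>0$ into a compactly supported subsolution built from the $\mathcal{A}_m/\mathcal{B}$ machinery with $(d_2,J_2,r_2(b-1),s_*)$ in place of $(d_1,J_1,r_1,s^*)$. The only place you deviate is in the pointwise spreading lemma, where you run a dichotomy on $\tilde u\equiv 0$ versus $\tilde u\not\equiv 0$ mirroring Lemma~\ref{second step}; the paper instead uses Theorem~\ref{uniform} directly to get $\inf_{(x,t)\in\R^2}u_\infty>0$, hence $u_\infty\equiv 1$, and then applies Proposition~\ref{spreading speed} to the scalar equation for $v_\infty$ with carrying capacity $b-1$. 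Your route is slightly longer (the $\tilde u\equiv 0$ branch is in fact vacuous once Theorem~\ref{uniform} is available) but equally valid, and it has the merit of not requiring the uniform-in-$n$ version of Theorem~\ref{uniform} at that point. One small caution you should be aware of when applying Lemma~\ref{A-A} (as the paper also glosses): that lemma is stated for one fixed solution, whereas here one has a sequence of solutions with varying initial data in $H$; the a priori estimates in its proof are uniform over $H$, so the same compactness conclusion holds, but strictly speaking one should invoke ``arguments similar to Lemma~\ref{A-A}'' rather than the lemma itself.
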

The method is the same as for the prey, before proving Theorem \ref{uniform-v}, we give the following three lemmas.
\begin{lemma}\label{pointwise}
(Pointwise weak spreading) Assume that $s<\underline{s}.$ Then for any $c\in(s, \underline{s})$, there exists $\varepsilon_{3}(c)>0$ such that for any $(u_{0}, v_{0})\in H$ with $u_{0}\not\equiv0$ and $v_{0}\not\equiv0$, the corresponding solution $(u, v)$ of \eqref{1} satisfies
$$\limsup_{t\rightarrow\infty}v(ct, t)\geq\varepsilon_{3}(c).$$
\end{lemma}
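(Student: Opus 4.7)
The plan is to mirror the proof of Lemma \ref{pointwise-u} by contradiction, ultimately producing a compactly supported exponential subsolution for $v_n$ whose amplitude grows without bound in time. Assume for contradiction that there exist initial data $\{(u_{0,n}, v_{0,n})\} \subset H$ with $u_{0,n}, v_{0,n} \not\equiv 0$ and $\lim_{n\to\infty} \limsup_{t\to\infty} v_n(ct, t) = 0$; choose $t_n \to +\infty$ with $\sup_{t \geq t_n} v_n(ct, t) \to 0$. The first step is to upgrade this pointwise smallness to uniform local smallness: for every $R > 0$,
\[
\lim_{n \to \infty} \sup_{|x| \leq R,\, t \geq t_n} v_n(x + ct, t) = 0.
\]
This is proved exactly as the claim \eqref{5.4} in Lemma \ref{pointwise-u}: otherwise, Lemma \ref{A-A} (applied after a suitable diagonal extraction) would supply an entire solution $(u_\infty, v_\infty)$ of the limiting system with $v_\infty \geq 0$, $v_\infty(0,0) = 0$ yet $v_\infty \not\equiv 0$, contradicting the strong maximum principle applied to the $v_\infty$-equation.

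The second step turns this local smallness of $v_n$ into a sharp lower bound $u_n \geq u_\star$ with $u_\star > 1/b$, uniformly on a moving window $|x - ct| \leq R'$ for $n$ and $t$ large enough. Fix small $\delta_1, \delta_\alpha > 0$; by the first step and the fact that $c > s$ makes $\alpha(x - st) \to 1$ locally uniformly in the moving frame, for $n$ and $t$ sufficiently large one has $v_n \leq \delta_1$ and $\alpha(x-st) \geq 1 - \delta_\alpha$ on $|x - ct| \leq R'$, so $u_n$ is a supersolution on that window of the scalar nonlocal KPP equation
\[
\partial_t w = d_1\bigl[(J_1 \ast w) - w\bigr] + r_1 w(1 - \delta_\alpha - a\delta_1 - w).
\]
Extracting an entire limit $(u_\infty, v_\infty)$ via Lemma \ref{A-A}, one has $v_\infty \equiv 0$ (as in Lemma \ref{second step}), while Theorem \ref{uniform} forces $u_\infty \geq \varepsilon > 0$ pointwise; a Liouville-type argument, obtained by applying Proposition \ref{spreading speed}(ii) to $u_\infty$ with initial time $t_1 - T$ and letting $T \to +\infty$, then forces $u_\infty \equiv 1$. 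Choosing $\delta_\alpha, \delta_1$ small enough and using $b > 1$, one obtains a constant $u_\star > 1/b$ such that $u_n(x + ct, t) \geq u_\star$ on $|x| \leq R$ for $n, t$ large.

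With $u_n \geq u_\star > 1/b$ on the window, $v_n$ is a supersolution of
\[
\partial_t w \geq d_2\bigl[(J_2 \ast w) - w\bigr] + r_2 w(bu_\star - 1 - w),
\]
and the scalar KPP spreading speed
\[
\inf_{\lambda > 0} \frac{d_2\bigl[\int_{\R} J_2(y) \ee^{\lambda y} \dy - 1\bigr] + r_2(bu_\star - 1)}{\lambda}
\]
converges to $s_\ast$ as $u_\star \to 1$, hence strictly exceeds $c < \underline{s} \leq s_\ast$ for $u_\star$ close to $1$. Copying the construction of Lemma \ref{pointwise-u}, set
\[
\psi(x, t) := \eta\,\ee^{\mu t}\,\ee^{-\beta(x - ct)} \cos\!\left(\frac{\pi(x - ct)}{2R}\right)
\]
on $x \in (-R + ct, R + ct)$ and $\psi \equiv 0$ elsewhere. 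The same algebra as \eqref{59}--\eqref{70}, with $J_1, r_1, d_1, \alpha$ replaced by $J_2, r_2(bu_\star - 1), d_2, 0$, determines admissible $\beta, R$ and $\mu > 0$ for which $\psi$ is a strict subsolution of the reduced $v$-equation. Taking $\eta$ small enough that $\psi(\cdot, T_0) \leq v_n(\cdot, T_0)$ at a large reference time $T_0$, the comparison principle yields $v_n(ct, t) \geq \psi(ct, t) = \eta\,\ee^{\mu t} \to +\infty$, contradicting $v_n \leq b - 1$.

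The principal obstacle is the middle step: lifting the bare lower bound $u_n \geq \varepsilon$ supplied by Theorem \ref{uniform} to a lower bound strictly above the threshold $1/b$, which is what makes the predator's net growth rate $-1 + bu_n$ positive. Unlike the prey case, where the bound $\alpha(x-st)\to 1$ in the moving frame directly enters the subsolution inequality, here the corresponding upgrade requires a Liouville-type property for entire solutions of the scalar nonlocal KPP equation that are uniformly positive, which is not in the paper's preliminaries and must be established by iterating Proposition \ref{spreading speed} across arbitrarily remote reference times.
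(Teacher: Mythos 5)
Your proposal is correct and follows essentially the same route as the paper's own proof: upgrade the pointwise smallness of $v_n$ along $x=ct$ to local uniform smallness via Lemma \ref{A-A} and the strong maximum principle, then combine $v_\infty\equiv 0$, Theorem \ref{uniform}, and a Liouville-type argument to force the entire limit $u_\infty\equiv 1$ (hence $u_n$ uniformly close to $1$ in the moving window), and finally feed the resulting inequality $\partial_t v_n\geq d_2[(J_2*v_n)-v_n]+r_2 v_n(b-1-\delta_1)$ into the growing subsolution construction of Lemma \ref{pointwise-u} to contradict $0\leq v_n\leq b-1$. One small remark: your ``$\liminf\inf\,u_n\to1$'' formulation of the middle step is the precise statement the argument requires, whereas the paper's displayed claim \eqref{5.10} is written with $\limsup\sup$, which is what its contradiction hypothesis actually negates and uses.
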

\begin{proof}
We let $c\in(s, \underline{s}),$ and assume by contradiction that there exists a sequence of solutions $\{(\tilde{u}_n, \tilde{v}_n)\}$ with
initial data $\{u_{0,n}, v_{0,n}\}\subset H$, with $u_{0,n}\not\equiv0$ and $v_{0,n}\not\equiv0$ such that
\begin{equation}\label{4.1f}
\lim_{n\rightarrow+\infty}\limsup_{t\rightarrow\infty}\tilde{v}_n(ct, t)=0.
\end{equation}
Then for each $n$, we can choose $t_n$ large enough such that
\begin{equation}\label{5.8}
\lim_{n\rightarrow+\infty}\sup_{t\geq t_n}\tilde{v}_n(ct, t)=0.
\end{equation}

Next we claim that for any $R>0$,
\begin{equation}\label{5.9}
\limsup_{n\rightarrow+\infty}\{\sup_{t\geq t_n, |x-ct|\leq R}\tilde{v}_n(x, t)\}=0.
\end{equation}
Indeed, assume by contradiction that there exist sequences $t_n'\geq t_n$ and $x_n\in[ct_n'-R, ct_n'+R]$ such that
\begin{equation}\label{3.4k}
\liminf_{n\rightarrow+\infty}\tilde{v}_{n}(x_n, t_n')>0.
\end{equation}
By using the arguments similar to Lemma \ref{A-A}, we can extract a subsequence such that the following convergence holds locally uniformly in $(x, t)\in\mathbb{R}\times\mathbb{R}$
\begin{equation*}\left\{\begin{array}{l}
\displaystyle\lim _{n \rightarrow \infty} \tilde{u}_{n}\left(x+x_n, t+t_{n}'\right) = \tilde{u}_{\infty}(x, t),\\
\displaystyle\lim _{n \rightarrow \infty} \tilde{v}_{n}\left(x+x_n, t+t_{n}'\right) = \tilde{v}_{\infty}(x, t),
\end{array}\right.\end{equation*}
where the limit function $(\tilde{u}_{\infty}, \tilde{v}_{\infty})$ is an entire solution of the following system:
\begin{equation}\label{670}
\left\{\begin{array}{l}
{\frac{\partial \tilde{u}_{\infty}}{\partial t}(x, t)=d_{1}\left[\left(J_{1} * \tilde{u}_{\infty}\right)(x, t)-\tilde{u}_{\infty}(x, t)\right]+r_{1}\tilde{u}_{\infty}(x, t)\left[1-\tilde{u}_{\infty}(x, t)-a\tilde{v}_{\infty}(x, t)\right],} \\
{\frac{\partial \tilde{v}_{\infty}}{\partial t}(x, t)=d_{2}\left[\left(J_{2} * \tilde{v}_{\infty}\right)(x, t)-\tilde{v}_{\infty}(x, t)\right]+r_2\tilde{v}_{\infty}(x, t)\left[-1+ b\tilde{u}_{\infty}(x, t)-\tilde{v}_{\infty}(x, t)\right].}
\end{array}\right.
\end{equation}
It is clearly that $\tilde{v}_{\infty}\geq0$. By extracting subsequences $t_n'\geq t_n$, $x_n\rightarrow x_{\infty}\in[ct_{n}'-R, ct_{n}'+R]$ and using \eqref{4.1f}, we deduce that $\tilde{v}_{\infty}(ct_{n}'-x_{\infty}, 0) = 0$. According to the strong maximum principle, we obtain that $\tilde{v}_{\infty}\equiv 0$. On the other hand, by \eqref{3.4k}, we deduce that $\tilde{v}_{\infty}(0,0)>0,$ a contradiction. Therefore, \eqref{5.9} holds.

Now, we claim that
\begin{equation}\label{5.10}
\limsup_{n\rightarrow+\infty}\{\sup_{t\geq t_n, |x-ct|\leq R}\tilde{u}_n(x, t)\}=1, \text{ for any }R>0.
\end{equation}
We assume by contradiction that there is a sequence $\{(x_n, t_n')\}$ with $t_n'\geq t_n$ and $x_n\in[ct_n'-R, ct_n'+R]$ such that
$$
\limsup_{n\rightarrow+\infty}\tilde{u}_n(x_n, t_n')<1.
$$
By the prior estimates, the Arzel\`{a}-Ascoli theorem and a diagonal extraction process, we can extract a subsequence $t_{n}'\rightarrow\infty$ such that
$$(\tilde{u}_{n}, \tilde{v}_n)(x+x_n, t+t_n')\rightarrow (u_{\infty}, v_{\infty})(x, t)\text{ locally uniformly }\text{as }n\rightarrow\infty,$$
where $(u_{\infty}, v_{\infty})(x, t)$ satisfies
\[
\left\{\begin{array}{l}
\frac{\partial u_{\infty}}{\partial t}(x, t)=d_{1}\left[\left(J_{1} * u_{\infty}\right)(x, t)-u_{\infty}(x, t)\right]+r_{1}u_{\infty}(x, t)\left[1-u_{\infty}(x, t)-av_{\infty}(x, t)\right], \\
\frac{\partial v_{\infty}}{\partial t}(x, t)=d_{2}\left[\left(J_{2} * v_{\infty}\right)(x, t)-v_{\infty}(x, t)\right]+r_2v_{\infty}(x, t)\left[-1+ bu_{\infty}(x, t)-v_{\infty}(x, t)\right].
\end{array}\right.
\]
Since $v_{\infty}(0, t)=0$ for all $t>0,$ by the strong maximum principle we get that $v_{\infty}\equiv 0$. In particular, $u_{\infty}$ satisfies
$$\frac{\partial u_{\infty}}{\partial t}(x, t)=d_{1}\left[\left(J_{1} * u_{\infty}\right)(x, t)-u_{\infty}(x, t)\right]+r_{1}u_{\infty}(x, t)\left(1-u_{\infty}(x, t)\right), \quad (x, t)\in\R^2.$$
On the other hand, by Theorem \ref{uniform}, we have that
$$\inf_{(x, t)\in\R^2} u_{\infty}(x, t)>0.$$
This implies that $u_{\infty}\equiv1,$ a contradiction to $u_{\infty}(0,0)<1$ by our choices of $x_n$ and $t_n'$. Hence \eqref{5.10} holds.

For any small $\delta_1>0$ and large $R>0$, we have
$$\frac{\partial \tilde{v}_{n}}{\partial t}(x, t)\geq d_{2}\left[\left(J_{2} * \tilde{v}_{n}\right)(x, t)-\tilde{v}_{n}(x, t)\right]+r_2\tilde{v}_{n}(x, t)\left(-1+ b-\delta_1\right), |x-ct_n|\leq R, t\geq t_n,$$
for any $n$ large enough. Similar to the proof of Lemma \ref{pointwise-u}, we infer that $\tilde{v}_{n}(ct, t)\rightarrow+\infty$ as $t\rightarrow+\infty$, a contradiction to \eqref{5.9}. We complete the proof of Lemma \ref{pointwise}.
\end{proof}
\begin{lemma}\label{c=0-v}
For any $c\in[0, \underline{s})$, there exists $\varepsilon_{3}'(c) >0$ such that, for any initial data satisfying $(u_0, v_0)\in H$ with $u_{0}\not\equiv0$ and $v_{0}\not\equiv0$, the corresponding solution $(u, v)$ of \eqref{3.13}
satisfies
$$
\limsup_{t\rightarrow+\infty}u(ct,t)\geq \varepsilon_{3}'(c).
$$
\end{lemma}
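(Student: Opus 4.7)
My plan is to follow the same contradiction strategy used in Lemma~\ref{pointwise-u} (and hence in Lemma~\ref{c=0}), now applied to the simpler system~\eqref{3.13} in which the shifting coefficient $\alpha$ no longer appears. First, I would suppose the conclusion fails: there exists a sequence of initial data $\{(u_{0,n}, v_{0,n})\} \subset H$ with $u_{0,n}, v_{0,n} \not\equiv 0$ such that $\lim_{n\to\infty}\limsup_{t\to\infty} u_n(ct, t) = 0$, where $(u_n, v_n)$ denotes the solution of~\eqref{3.13} with this initial data. Pick $t_n \to \infty$ with $\sup_{t\geq t_n} u_n(ct, t) \to 0$.

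Second, I would upgrade this to local uniform decay in a moving window about $x = ct$. Using the Arzel\`{a}-Ascoli argument of Lemma~\ref{A-A} (which is simpler here because $\alpha \equiv 1$ in~\eqref{3.13}) together with the strong maximum principle, one obtains $\sup_{|x|\leq R,\,t\geq t_n} u_n(x+ct, t)\to 0$ for every $R>0$. A parallel argument then forces $v_n$ to vanish locally in the moving frame as well: otherwise, extracting limits produces an entire solution $(u_\infty, v_\infty)$ with $u_\infty \equiv 0$ and $v_\infty \not\equiv 0$, in which case the $v$-equation degenerates into $\frac{\partial v_\infty}{\partial t} \leq d_2(J_2 * v_\infty - v_\infty) - r_2 v_\infty$, and comparison with the upper solution $(b-1)\ee^{-r_2(t+t_0)}$ followed by $t_0\to\infty$ forces $v_\infty\equiv 0$, exactly as in Lemma~\ref{pointwise-u}.

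Third, the simultaneous local smallness of $u_n$ and $v_n$ yields the KPP-type pointwise lower bound
\[
\frac{\partial u_n}{\partial t}(x,t) \geq d_1\bigl[(J_1*u_n)(x,t)-u_n(x,t)\bigr] + r_1 u_n(x,t)\bigl(1 - u_n(x,t) - a\delta_1\bigr)
\]
on $\{|x-ct|\leq R,\,t\geq t_n\}$ for any prescribed small $\delta_1 > 0$. I would then insert the exponential subsolution $\phi_{R,\beta,\eta}$ from~\eqref{52} and verify the characteristic conditions~\eqref{59}--\eqref{70}; because $c<\underline{s}\leq s^*$, the same selection of $\beta$, $R$, $\eta$, and $m$ as in Lemma~\ref{pointwise-u} makes $\phi$ a strict subsolution of the linearized problem. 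The comparison principle then gives $u_n(ct, t) \geq \phi(ct, t) = \eta \ee^{r_1 a \delta_1 t}$, which blows up and contradicts $0\leq u_n \leq 1$.

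The main obstacle is the construction of the exponential subsolution and the simultaneous fulfillment of the two characteristic conditions. Since~\eqref{3.13} has a spatially constant intrinsic growth rate, this step is a direct specialization of the construction already executed in Lemma~\ref{pointwise-u}, and therefore no new analytic difficulty arises beyond that earlier argument; the proof can be carried out verbatim with $\alpha_n$ replaced by $1$ and the shifting-derivative terms simply dropped.
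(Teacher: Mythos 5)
Your argument is a correct proof of the conclusion as literally printed; indeed it is a verbatim specialization of Lemma~\ref{pointwise-u} to the autonomous system \eqref{3.13}, which is exactly what the paper intends for the identically stated Lemma~\ref{c=0}. The difficulty is that the printed conclusion of Lemma~\ref{c=0-v} appears to be a typographical error: everything in the surrounding text indicates the intended estimate is on the predator, namely $\limsup_{t\to\infty}v(ct,t)\geq\varepsilon_3'(c)$. The paper's one-line justification says the proof ``is similar to Lemma~\ref{pointwise}'', which is the pointwise weak spreading result for $v$, not for $u$ (the $u$-version is Lemma~\ref{pointwise-u}, and its homogeneous counterpart is already Lemma~\ref{c=0}). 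The lemma sits in Section~4, ``Survival of the predator $v$''. Decisively, in the proof of Theorem~\ref{uniform-v} the constant $\varepsilon_3'(c)$ appears only as a lower threshold for $v_n(ct,t)$ in the definition of $\tau_n$, and the final contradiction is drawn from $\tilde v(0,t)\leq\min\{\varepsilon_3'(c),\varepsilon_4(\underline{s}-\eta/2)\}/2$; a bound on $u$ would be of no use there and would anyway make Lemma~\ref{c=0-v} a duplicate of Lemma~\ref{c=0}.

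Read with $v$ in place of $u$, your proof establishes the wrong quantity: a blowing-up subsolution for the prey equation gives no information about $v(ct,t)$. The intended argument tracks Lemma~\ref{pointwise}, not Lemma~\ref{pointwise-u}, and it diverges from yours after your second step. Starting the contradiction from $\lim_n\limsup_t v_n(ct,t)=0$, one first upgrades to local uniform decay of $v_n$ in a moving window around $x=ct$ exactly as you outline; but one then shows, as in claim~\eqref{5.10}, that $\limsup_n\sup_{t\geq t_n,\,|x-ct|\leq R}u_n(x,t)=1$ for every $R>0$ --- in the limit $v_\infty\equiv 0$, so $u_\infty$ is an entire solution of the scalar nonlocal KPP equation, and a uniform positive lower bound on $u_\infty$ (the counterpart of invoking Theorem~\ref{uniform} in Lemma~\ref{pointwise}) forces $u_\infty\equiv 1$. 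Consequently, for any small $\delta_1>0$ and $R$ large, in the moving window the predator obeys $\partial_t v_n\geq d_2[(J_2*v_n)-v_n]+r_2 v_n(b-1-\delta_1-v_n)$, and since $c<\underline{s}\leq s_*$ the exponential subsolution machinery of \eqref{52}--\eqref{70} is then applied to the $v$-equation (kernel $J_2$, diffusion $d_2$, linear rate $r_2(b-1-\delta_1)$ in place of $r_1(1-a\delta_1)$), producing blow-up along $x=ct$ that contradicts $0\leq v_n\leq b-1$. As a minor side remark on your own write-up: Lemma~\ref{c=0-v} allows $c=0$, in which case the characteristic condition \eqref{70} is solved only at $\beta=0$ (because $B$ is increasing with $B(0)=0$), so the endpoint case should be handled with the stationary cosine subsolution rather than by solving \eqref{70} for a positive $\beta$; the adaptation is not quite ``verbatim''.
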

\begin{proof}
  The proof of Lemma \ref{c=0-v} is similar to Lemma \ref{pointwise}, so we omit it.
\end{proof}
\begin{lemma}\label{second step-v}
(Pointwise spreading) Assume that $s<\underline{s}.$ Then for any $(u_{0}, v_{0})\in H$ with $u_{0}\not\equiv0$ and $v_{0}\not\equiv0$, for all $c\in(s, \underline{s})$, there exists $\varepsilon_{4}(c) > 0$ such that the corresponding solution $(u, v)$ of \eqref{1} satisfies
\begin{equation*}
\displaystyle\liminf _{t \rightarrow+\infty} v(ct, t) \geq \varepsilon_{4}\left(c\right).
\end{equation*}
\end{lemma}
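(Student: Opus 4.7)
The plan is to mirror the proof of Lemma \ref{second step} with the roles of $u$ and $v$ interchanged, using Lemma \ref{pointwise} in place of Lemma \ref{pointwise-u} and Lemma \ref{c=0-v} in place of Lemma \ref{c=0}. The engine is a contradiction argument combined with the Arzel\`a--Ascoli compactness supplied by Lemma \ref{A-A}.

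I would begin by supposing, for contradiction, that there exist initial data $(u_{0,n}, v_{0,n}) \in H$ with $u_{0,n}, v_{0,n} \not\equiv 0$ and a time sequence $t_n \to +\infty$ satisfying $v_n(ct_n, t_n) \to 0$. Lemma \ref{pointwise} then produces another sequence $t_n' < t_n$ along which $v_n(ct_n', t_n') \geq \varepsilon_3(c)/2$. Setting
\[
\tau_n := \sup\left\{t_n' \leq t \leq t_n : v_n(ct, t) \geq \tfrac{\varepsilon_3(c)}{2}\right\},
\]
continuity gives $v_n(c\tau_n, \tau_n) = \varepsilon_3(c)/2$ together with $v_n(ct, t) \leq \varepsilon_3(c)/2$ on $(\tau_n, t_n)$. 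Next, I would apply Lemma \ref{A-A} to the shifts $(u_n, v_n)(\cdot + c\tau_n, \cdot + \tau_n)$ to extract, along a subsequence, a locally uniform limit $(\tilde u, \tilde v)$, which is an entire-in-time solution of the limit system \eqref{3.13}. By construction $\tilde v(0,0) = \varepsilon_3(c)/2 > 0$, and the same elementary argument used in the proof of Lemma \ref{second step} rules out the possibility that $t_n - \tau_n$ remains bounded, so $\tilde v(ct, t) \leq \varepsilon_3(c)/2$ for all $t \geq 0$.

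The argument then closes via a dichotomy on $\tilde u$. If $\tilde u \equiv 0$, the $v$-equation of \eqref{3.13} becomes $\partial_t \tilde v = d_2(J_2*\tilde v - \tilde v) + r_2 \tilde v(-1 - \tilde v) \leq d_2(J_2*\tilde v - \tilde v) - r_2 \tilde v$; comparison with the super-solution $(b-1)\ee^{-r_2(t+T)}$ on $[-T, +\infty)$ followed by $T \to +\infty$ (verbatim the argument of \eqref{a}--\eqref{d}) yields $\tilde v(0,0) = 0$, contradicting the value $\varepsilon_3(c)/2 > 0$. If instead $\tilde u \not\equiv 0$, then $(\tilde u, \tilde v)$ is a nontrivial entire solution of \eqref{3.13}; applying Lemma \ref{c=0-v} to $(\tilde u, \tilde v)$ restarted at any time $-N$ (after verifying non-triviality of both components at $-N$ via the strong maximum principle) gives $\limsup_{t\to +\infty} \tilde v(ct, t) \geq \varepsilon_3'(c)$. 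Provided $\varepsilon_3(c)$ is redefined from the outset as $\min\{\varepsilon_3(c), \varepsilon_3'(c)\}$, which only weakens Lemma \ref{pointwise} but keeps it valid, the strict inequality $\varepsilon_3(c)/2 < \varepsilon_3'(c)$ produces the required contradiction.

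The main obstacle I foresee lies in the branch $\tilde u \equiv 0$: unlike the prey analogue in Lemma \ref{second step}, where the degenerate case reduced to a scalar KPP equation amenable to Proposition \ref{spreading speed}, here the absence of prey only makes $\tilde v$ decay exponentially, not spread to saturation. To turn this into a contradiction I must genuinely exploit that $(\tilde u, \tilde v)$ is defined for all $t \in \RR$, sliding the initial time of the comparison arbitrarily far into the past so that the exponential factor $\ee^{-r_2 T}$ collapses $\tilde v(0,0)$ to zero. This entire-in-time structure is exactly what Lemma \ref{A-A} guarantees, so the compactness lemma is effectively used twice: once to produce the limit profile $(\tilde u, \tilde v)$ at $(0,0)$, and again implicitly to permit its backward-in-time translation when invoking Lemma \ref{c=0-v}.
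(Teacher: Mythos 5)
Your proof is correct, but it takes a genuinely different route from the paper's. The paper does \emph{not} mirror the dichotomy used in Lemma \ref{second step}; instead, after extracting the entire solution $(u_{\infty}, v_{\infty})$ with $v_{\infty}(0,0)=\varepsilon_3(c)/2$ and $v_{\infty}(ct,t)\leq\varepsilon_3(c)/2$ for $t\geq 0$, the paper invokes Theorem \ref{uniform} (uniform spreading of $u$, already established in Section~3) to obtain $\inf_{(x,t)\in\R^2}u_{\infty}(x,t)>0$. From this it deduces $u_{\infty}\equiv 1$, so that $v_{\infty}$ decouples into the scalar KPP-type equation $\partial_t v_{\infty}=d_2(J_2*v_{\infty}-v_{\infty})+r_2 v_{\infty}(-1+b-v_{\infty})$, whereupon Proposition \ref{spreading speed} and $(H1)$ give $v_{\infty}(ct,t)\to b-1$, contradicting the upper bound. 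Your approach keeps the structural symmetry with Lemma \ref{second step} by splitting into $\tilde u\equiv 0$ and $\tilde u\not\equiv 0$. You correctly recognize, and handle, the asymmetry in the degenerate branch: when $\tilde u\equiv 0$ the limit $v$-equation forces exponential decay rather than saturation, and the backward-in-time comparison with $(b-1)\ee^{-r_2(t+T)}$, sending $T\to\infty$, kills $\tilde v(0,0)$, which is exactly the argument used in the proof of \eqref{5.5} of Lemma \ref{pointwise-u}. In the branch $\tilde u\not\equiv 0$ you invoke Lemma \ref{c=0-v}, and you are right to note the normalization $\varepsilon_3(c)\leq\varepsilon_3'(c)$ needed for the contradiction to close (the paper leaves this implicit in Lemma \ref{second step} as well). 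One small simplification: the ``restart at time $-N$'' is unnecessary since positivity of $\tilde u(\cdot,0)$ and $\tilde v(\cdot,0)$ already holds by the strong maximum principle, so applying Lemma \ref{c=0-v} at $t=0$ suffices; if one does shift the time origin, the spatial variable must be shifted by $cN$ as well, or the resulting $\limsup$ is along the ray $x=ct+cN$ rather than $x=ct$. Net assessment: both proofs ultimately lean on Theorem \ref{uniform} (yours does so indirectly, through Lemma \ref{c=0-v}), so there is no real gain in independence; the paper's route is shorter because it exploits $u_{\infty}\equiv 1$ directly and avoids the case split, while yours is more closely parallel to Lemma \ref{second step} and makes the degenerate case explicit.
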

\begin{proof}
The method is the same as that of Lemma \ref{second step}. Fixing $c\in(s, \underline{s})$ and proceeding by contradiction, we find an entire solution $(u_{\infty}, v_{\infty})$ of \eqref{3.13}, such that $v_{\infty} (0, 0) = \frac{\varepsilon_3(c)}{2}>0$ and
\begin{align}\label{5.11}
v_{\infty}(ct, t) \leq\frac{\varepsilon_3(c)}{2}, \forall t\geq0.
\end{align}
Moreover, by Theorem \ref{uniform}, we have that
$$\inf_{(x, t)\in\R^2}u_{\infty}(x, t)>0,$$
which implies that $u_{\infty}\equiv1.$ Thus
$$\frac{\partial v_{\infty}}{\partial t}(x, t)=d_{2}\left[\left(J_{2} * v_{\infty}\right)(x, t)-v_{\infty}(x, t)\right]+r_2v_{\infty}(x, t)\left[-1+ b-v_{\infty}(x, t)\right].$$
Since $c<\underline{s}\leq s^*$, and by Proposition \ref{c=0} and Assumption (H1), we have that
\begin{equation}\label{m}
\lim_{t\rightarrow+\infty} v_{\infty}(ct, t)=b-1>0.
\end{equation}
which contradicts \eqref{5.11}. Thus we complete the proof of Lemma \ref{second step-v}.
\end{proof}

\begin{proof}[Proof of Theorem \ref{uniform-v}]
We fix $\eta$ and argue by contradiction by assuming that there exist $\{t_{n, k}\}$ and  $\{x_{n, k}\}$ with $t_{n, k}\rightarrow+\infty$, as $k\rightarrow+\infty,$ and
$$x_{n, k}\in[(s+\eta)t_{n, k}, (\underline{s}-\eta) t_{n, k}],$$
such that
\begin{equation*}
v_n(x_{n, k}, t_{n,k})\leq\frac{1}{n},
\end{equation*}
for any positive integers $n$ and $k$. However, applying Lemma \ref{second step-v}, we have that
\begin{equation*}
\liminf_{t \rightarrow+\infty}v_n((\underline{s}-\eta/2)t, t)\geq\varepsilon_4(\underline{s}-\eta/2).
\end{equation*}
In particular, we define another time sequence
$$t_{n,k}':= \frac{x_{n,k}}{\underline{s}-\eta/2}\in[0, t_{n,k}),\quad \forall n\geq0,\quad \lim_{k\rightarrow+\infty}t_{n,k}'=+\infty.$$
Then applying Lemma \ref{second step-v}, one has that
\begin{equation*}
v_n(x_{n,k}, t_{n,k}')\geq\frac{\varepsilon_4(\underline{s}-\eta/2)}{2},
\end{equation*}
for any $k$ large enough. For each $n$, we choose such a large $k$ and drop it from our notation for convenience. Then we can define
\[
\begin{array}{l}
\tau_{n}:=\sup \left\{t_n^{\prime} \leq t \leq t_n| v_{n}\left(ct, t\right) \geq \frac{\min\{\varepsilon_{3}'(c), \varepsilon_4(\underline{s}-\eta/2)\}}{2}\right\},
\end{array}
\]
where $\varepsilon_{3}'(c)$ comes from Lemma \ref{c=0-v}. Similar to the proof of Lemma \ref{second step}, we have that
$$t_n-\tau_n\rightarrow+\infty, \text{ as }n\rightarrow+\infty.$$
We consider the functions
\begin{equation}\label{3.32}
\displaystyle \tilde{u}_{n}(x, t)=u(x+c \tau_{n}, \tau_n+t),\quad\tilde{v}_{n}(x, t)=v(x+c \tau_{n}, \tau_n+t),
\end{equation}
and
$$\displaystyle \lim_{n\rightarrow\infty}\tilde{u}_{n}(x, t):=\tilde{u}(x, t), \quad\lim_{n\rightarrow\infty}\tilde{v}_{n}(x, t):=\tilde{v}(x, t).$$
Then we deduce that
\[
\begin{array}{l}
\tilde{v}\left(0, 0\right) = \frac{\min\{\varepsilon_{3}'(c), \varepsilon_4(\underline{s}-\eta/2)\}}{2},\quad
\tilde{v}\left(0, t\right) \leq \frac{\min\{\varepsilon_{3}'(c), \varepsilon_4(\underline{s}-\eta/2)\}}{2}, \text{ for all }t\geq0.
\end{array}
\]
Regardless of whether $\tilde{v}\equiv0$ or $\tilde{v} > 0$, we reach a contradiction with either the result of
Proposition \ref{spreading speed}, or Lemma \ref{c=0-v}. We complete the proof of Theorem \ref{uniform-v}.
\end{proof}

\section*{Acknowledgments}
We are very grateful to the anonymous referee for careful reading and helpful suggestions which led to an improvement of our original manuscript. This work is supported by the National Natural Science Foundation of China (No. 12171039 and 12271044).

\section*{Data availability statement}
The authors declare that no data was used in this paper.

\section*{Conflict of interest statement}
The authors declare that they have no known competing financial interests or personal relationships that could have appeared to influence the work reported in this paper.

\end{document}